\documentclass[11pt]{article}

\usepackage[T1]{fontenc}
\usepackage[utf8]{inputenc}
\usepackage{lmodern}
\usepackage{amsfonts,amssymb,amsmath,amsthm,mathrsfs,mathtools}
\usepackage[dvipsnames]{xcolor}
\usepackage[margin=1.25in]{geometry}
\usepackage{footnote}

\usepackage[colorlinks=true]{hyperref}
\hypersetup{
  pdftitle={A higher-dimensional Schrödinger maximal estimate along families of bilipschitz curves},
  pdfauthor={Javier Minguillón},
  pdfcreator={LaTeX with hyperref},
  pdfproducer={pdfLaTeX},
  colorlinks=true,
  linkcolor={blue!80!black},
  citecolor={blue!50!black},
  urlcolor={blue!50!black}
}
\usepackage{cleveref}

\newtheorem{theorem}{Theorem}
\newtheorem{lemma}[theorem]{Lemma}
\newtheorem{proposition}[theorem]{Proposition}
\newtheorem{corollary}[theorem]{Corollary}
\theoremstyle{remark}
\newtheorem{remark}[theorem]{Remark}

\newcommand{\R}{\mathbb{R}}
\newcommand{\Z}{\mathbb{Z}}

\newcommand{\eps}{\varepsilon}
\newcommand{\abs}[1]{\lvert #1\rvert}
\newcommand{\norm}[1]{\lVert #1\rVert}
\newcommand{\Mtheta}{M_{\Theta}}
\newcommand{\dimM}{\dim_{M}}
\newcommand{\supp}{\operatorname{supp}}

\title{A higher-dimensional Schrödinger maximal estimate along families of bi-Lipschitz curves}

\date{\today}
\author{Javier Minguillón}
\begin{document}

\maketitle

\begin{abstract}
Let $\Theta\subset\R^n$, $n\geq 1$ be compact with upper Minkowski dimension $\beta$, and let $\gamma(x,t,\theta)$ be a family of curves which is bi-Lipschitz in the first variable and Lipschitz in the last two. We prove the maximal estimate
$$\left\|\sup_{(\tau,\theta)\in[0,1)\times\Theta}\abs{e^{i\tau\Delta}f(\gamma(\,\cdot\,,\tau,\theta))}\right\|_{L^2(B^n_1)}\lesssim_s\norm{f}_{H^s(\R^n)},$$
for every $s>\tfrac{n(\beta+1)}{2(n+1)}$, in the range $0\le\beta\le n-1$. In the range $n-1<\beta\leq n$ we prove the threshold $s>\min\big\lbrace\frac\beta 2 + \frac 1{2(n+1)},\frac n2\big\rbrace$.
The proof relies on a fractal $L^2$ estimate of Du--Zhang. We also use the temporal localization of Lee, the locally constant property, and a counting argument allowed by the Lipschitz properties of the curve.
\end{abstract}

\footnote{2020 \textit{Mathematics Subject Classification.} 
	35Q41, 42B25, 42B37.
}
\footnote{\textit{Keywords.} Schrödinger maximal function, almost everywhere convergence, non-tangential convergence, upper Minkowski dimension.
}
\vspace{-10mm}
\section{Introduction and main result}\label{sec:intro}

Consider the linear Schrödinger equation on $ \R^n\times \R $, $ n\geq 1, $ given by
\begin{equation}
	\label{equation_schrodinger with boundary datum}
	\begin{cases}
		i\partial_t u(x,t) + \Delta_x u (x,t) = 0,
		\\
		u(x,0) = f(x).
	\end{cases}
\end{equation}
Its solution can be formally expressed as
\begin{equation}
	e^{it\Delta}f(x) = \int_{\R^n} e^{2\pi i x\cdot \xi} e^{-4\pi^2 it|\xi|^2} \widehat f(\xi) d\xi.
\end{equation}

In 1980 Carleson \cite{Car} asked for the minimal regularity $s$ such that $e^{it\Delta}f\to f$ almost everywhere as $t\to0^+$ for every $f\in H^s(\R^n)$. For $n=1$ the sufficiency of $s\ge\tfrac14$ was found by Carleson \cite{Car} and Dahlberg--Kenig \cite{DK} proved the necessity. Over the years, many researchers have contributed to this problem, including Carbery, Cowling, Sjölin, Vega, Moyua, Vargas, Tao, and Bourgain, among others. In higher dimensions the problem stayed open for three decades, and was recently solved up to the endpoint. Bourgain \cite{Bou} constructed initial data in $H^s$ with $s<\tfrac{n}{2(n+1)}$ whose solutions diverge on a set of positive measure. Soon after, Du--Guth--Li \cite{DGL} and Du--Zhang \cite{DZ} proved that $s>\tfrac{n}{2(n+1)}$ suffices in dimension $n=2$ and $n\geq 3$ respectively. The second article does it via the fractal $L^2$ estimate stated in Proposition~\ref{prop:DZ} below.

Convergence along curves
is a strictly harder question. If the approach direction $\theta$ is restricted to a fixed compact set $\Theta$, the answer should interpolate between two extremes. First, for a single direction one expects the threshold $s\geq \frac n{2(n+1)}$. Second, for a full cone of directions ($\Theta$ of positive measure) Sjögren--Sjölin \cite{SS} showed that $s>\tfrac n2$ is necessary. The condition $s>\frac n2$ is known to be sufficient by Sobolev embedding. 
The upper Minkowski dimension $\beta$ of $\Theta$ is the parameter that interpolates between these two extremes.
In $\R^1$ this interpolation was carried out by Cho--Lee--Vargas \cite{CLV}, who obtained the threshold $s>\tfrac{\beta+1}{4}$ (see also Shiraki \cite{Shi} for the fractional Schrödinger operator). In $\R^2$, for a family of curves rather than straight directions, this was studied by Li--Wang--Yan \cite{LWY}, who obtained $s>\tfrac{\beta+1}{3}$. The purpose of this note is to prove the analogous estimate in all dimensions, for the class of curves considered by Li--Wang--Yan.

Let $n\ge1$ throughout this paper. Let $\Theta\subset\R^n$ be a compact set. Denote its upper Minkowski dimension by
\begin{equation}\label{eq:dimM}
\beta:=\dimM(\Theta)=\limsup_{\delta\to0^+}\frac{\log N(\Theta,\delta)}{\log 1/\delta}\in[0,n],
\end{equation}
where $N(E,\delta)$ denotes the minimum number of $\delta$-balls needed to cover $E$.

Let $\gamma:\R^n\times[0,1)\times\Theta\to\R^n$ with $\gamma(x,0,\theta)=x$ be a family of curves satisfying the conditions of Li--Wang--Yan \cite{LWY}. That is, for some constants $C_1\ge1$ and $C_2,C_3>0$, and for all $x,x'\in\R^n$, $t,t'\in[0,1)$ and $\theta,\theta'\in\Theta$,
\begin{align}
C_1^{-1}\abs{x-x'}\ \le\ \abs{\gamma(x,t,\theta)-\gamma(x',t,\theta)}\ &\le\ C_1\abs{x-x'},\tag{C1}\label{C1}\\
\abs{\gamma(x,t,\theta)-\gamma(x,t',\theta)}\ &\le\ C_2\abs{t-t'},\tag{C2}\label{C2}\\
\abs{\gamma(x,t,\theta)-\gamma(x,t,\theta')}\ &\le\ C_3\abs{\theta-\theta'}.\tag{C3}\label{C3}
\end{align}
We call $x$ the \emph{base point} of the curve $t\mapsto\gamma(x,t,\theta)$. Condition \eqref{C1} says that at each fixed time the map $x\mapsto\gamma(x,t,\theta)$ is a bi-Lipschitz deformation of $\R^n$. Denote 
\begin{equation}
\Gamma_{x,t} := \{\gamma(x,t,\theta):\theta\in \Theta\}.
\end{equation}
Define the maximal operator
\begin{equation}\label{eq:Mtheta}
\Mtheta f(x):=\sup_{(\tau,\theta)\in[0,1)\times\Theta}\abs{e^{i\tau\Delta}f(\gamma(x,\tau,\theta))}.
\end{equation}

\subsection*{Notation}
In this article, $B^n_r(x)\subset\R^n$ is the open ball of radius $r>0$ centered at $x\in\R^n$, and we write $B^n_r:=B^n_r(0).$  
For $r>0$, we let $Q^n_r(v)$ denote the closed cube of side $r$ centered at $v\in\R^n$. A \emph{lattice unit cube} is a cube $Q^n_1(v)$ with $v\in\Z^n$, or a cube $Q^{n+1}_1(w)$ with $w\in\Z^{n+1}$. Let $\pi:\R^{n+1}\to\R^n$ be the projection to the first $n$ variables. For $\lambda\ge1$ set $A(\lambda):=\{\xi\in\R^n:\frac\lambda2\le\abs\xi\le2\lambda\}$. We write $A\lesssim B$ for $A\le CB$ with $C$ an absolute constant allowed to depend on $n$, $C_1$, $C_2$, $C_3$ and $\Theta$. We use $A\lesssim_\eps B$ when $C=C_\eps$ may in addition depend on $\eps$. We denote $A\sim B$ for $A\lesssim B\lesssim A$. Throughout, $R\gg1$ denotes a large number and $N$ some integer larger than $100n$.

We are going to use the fractal $L^2$ estimate of Du--Zhang in the following form. For a union $X=\bigcup_kQ_k$ of lattice unit cubes contained in $B^{n+1}_{R}(0)$ and for $1\le\alpha\le n+1$, define the fractal density
\begin{equation}\label{eq:gammadef}
\nu_{X,\alpha}\ :=\ \max_{\substack{B^{n+1}_r(w)\subseteq B^{n+1}_{R}(0)\\ r\ge1,\ \abs w\le R}}\ \frac{\#\{k:\ Q_k\subset B^{n+1}_r(w)\}}{r^{\alpha}} .
\end{equation}

\begin{proposition}[Du--Zhang {\cite[Corollary~1.7]{DZ}}]\label{prop:DZ}
Let $R\ge1$, let $X$ and $\nu_{X,\alpha}$ be as above, and let $h\in L^2(\R^n)$ with $\supp\widehat h\subset B^n_2(0)$. Then, for every $\eps>0$,
\begin{equation}\label{eq:DZ}
\norm{e^{it\Delta}h}_{L^2(X)}\ \lesssim_\eps\ \nu_{X,\alpha}^{\frac1{n+1}}\,(R)^{\frac{\alpha}{2(n+1)}+\eps}\,\norm h_2 .
\end{equation}
\end{proposition}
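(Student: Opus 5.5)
Proposition~\ref{prop:DZ} is not proved from scratch here. The plan is to reduce it to the main fractal $L^2$ theorem of Du--Zhang \cite[Theorem~1.6]{DZ} by a few normalization steps. In their formulation the hypotheses are:
\begin{itemize}
\item $\supp\widehat h\subset B^n_1(0)$;
\item $X$ is a union of lattice unit cubes in $B^n_R\times[0,R]$;
\item the density is defined through balls of radius $r\ge1$, exactly as in \eqref{eq:gammadef}.
\end{itemize}
Their conclusion is $\norm{e^{it\Delta}h}_{L^2(X)}\lesssim_\eps \nu_{X,\alpha}^{1/(n+1)}R^{\alpha/(2(n+1))+\eps}\norm h_2$. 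So I only need to absorb three discrepancies, each at the cost of a constant: the frequency support $B^n_2$ instead of $B^n_1$, the time interval $[-R,R]$ instead of $[0,R]$, and the change in $\nu_{X,\alpha}$ under these operations.

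\textbf{Step 1: frequency support.} I would use parabolic rescaling. Put $h_2(x):=2^{-n/2}h(x/2)$, so that $\widehat{h_2}$ is supported in $B^n_1$ and $\norm{h_2}_2=\norm h_2$. Then $e^{it\Delta}h(x)=2^{n/2}e^{i4t\Delta}h_2(2x)$. Hence $\norm{e^{it\Delta}h}_{L^2(X)}$ equals, up to a constant, $\norm{e^{it\Delta}h_2}_{L^2(X')}$, where $X'$ is the image of $X$ under $(x,t)\mapsto(2x,4t)$. The set $X'$ is contained in $B^{n+1}_{4R}$. Each image of a unit cube is a box of sides $2$ and $4$, which is covered by $O(1)$ lattice unit cubes. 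Let $\widetilde X$ be the union of all these lattice cubes. Then $\widetilde X\supset X'$, and its density with respect to balls of radius $r\ge1$ satisfies $\nu_{\widetilde X,\alpha}\lesssim \nu_{X,\alpha}$. To see this, note that a ball of radius $r$ in the new coordinates pulls back into a ball of radius at most $r$ in the old ones. Counting the cubes it meets, plus a neighbourhood, changes the count only by a constant factor, since $r\ge1$ and $\alpha\le n+1$.

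\textbf{Step 2: negative times.} I would split $X=X_+\cup X_-$ according to whether a cube lies in $\{t\ge0\}$ or $\{t<0\}$; straddling cubes are assigned to both after a shift by at most one unit. For $X_-$ I use the identity $\overline{e^{it\Delta}g(x)}=e^{-it\Delta}\bar g(x)$. The function $\bar g$ still has Fourier support in a centered ball, because $\widehat{\bar g}(\xi)=\overline{\widehat g(-\xi)}$. This reflects $X_-$ into the upper half-space without changing the density. Finally, a time translation by a bounded amount and the enlargement $R\mapsto 4R$ only cost constants, since $(4R)^{\alpha/(2(n+1))+\eps}\lesssim R^{\alpha/(2(n+1))+\eps}$.

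\textbf{Main obstacle.} The only step requiring care is the density comparison in Step 1. The maximum in \eqref{eq:gammadef} is taken over balls contained in $B^{n+1}_R$, and after rescaling some balls may poke outside. I would handle this by replacing a ball of radius $r$ with a concentric ball of radius $\min(2r,R)$, or by a doubling argument. This is harmless because $\nu$ only changes by a factor $2^{\alpha}$. With the three reductions in place, the estimate \eqref{eq:DZ} follows directly from \cite[Theorem~1.6]{DZ}.
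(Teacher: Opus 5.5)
\textbf{Gap in Step 1.} The paper does not prove this proposition. It quotes the $L^2$ estimate from Du--Zhang (their Corollary~1.7, which as quoted is formulated for lattice cubes in $B^{n+1}(0,R)$, so your Step~2 is unnecessary), and it tacitly treats the change of frequency radius from $1$ to $2$ as harmless. Supplying that change by parabolic rescaling is sensible, but the step you flag as the main obstacle genuinely fails, and your fix does not repair it.

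The density \eqref{eq:gammadef} only tests balls $B^{n+1}_r(w)\subseteq B^{n+1}_R(0)$. A pulled-back ball poking out of $B^{n+1}_R(0)$ cannot be replaced by a concentric ball of radius $\min(2r,R)$, since that pokes out even further. In general no ball of radius $O(r)$ inside $B^{n+1}_R(0)$ contains the relevant cubes, and in fact $\nu_{\widetilde X,\alpha}\lesssim\nu_{X,\alpha}$ is false.

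Here is a counterexample.
\begin{itemize}
\item If $B^{n+1}_s(w)\subseteq B^{n+1}_R(0)$ with $s\le R/2$, then every point of that ball at distance $d$ from the line $\R w$ lies at depth $\gtrsim d^2/s$ below $\partial B^{n+1}_R(0)$.
\item Take an integer $L$, an integer $k\sim L^4$, and set $R:=\bigl((k+\tfrac12)^2+n(L+1)^2\bigr)^{1/2}$. Let $X$ be the layer of lattice cubes centred at $(k,j)$ with $j\in\Z^n$, $\abs{j}_\infty\le L$, where the first coordinate is spatial.
\item The outer faces of these cubes lie within $O(L^2/R)=O(R^{-1/2})$ of the sphere. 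So any admissible ball containing one of them has radius $s\gtrsim R^{1/2}$, and hence $\nu_{X,\alpha}\lesssim L^nR^{-\alpha/2}\sim R^{\frac n4-\frac\alpha2}$.
\item Their images under $(x,t)\mapsto(2x,4t)$ lie in a ball of radius $O(L)$ around $(2k,0)$, deep inside $B^{n+1}_{4R}(0)$. Therefore $\nu_{\widetilde X,\alpha}\gtrsim R^{\frac{n-\alpha}4}$, which is larger by a factor $R^{\alpha/4}$.
\end{itemize}
The same example shows that merely enlarging the ambient ball from $B^{n+1}_R(0)$ to $B^{n+1}_{4R}(0)$ changes $\nu$, not just the power of $R$.

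\textbf{How to close the gap.} There are two ways.
\begin{itemize}
\item Check that Du--Zhang's proof runs unchanged with $B^n_1(0)$ replaced by $B^n_2(0)$. This is what the paper implicitly assumes.
\item Prove the proposition with $\nu_{X,\alpha}$ replaced by $\nu^*_{X,\alpha}$, the same maximum taken over all balls with $r\ge1$ and no containment condition. This quantity is stable under your rescaling: $\nu_{\widetilde X,\alpha}\le\nu^*_{\widetilde X,\alpha}\lesssim\nu^*_{X,\alpha}$. With it, your Step~1 goes through verbatim. It is also all that Sections~\ref{sec:proof} and~\ref{sec:highbeta} need. Lemma~\ref{lem:pack} counts base points reaching an arbitrary ball $B^n_r(v)$, and balls with $r\gtrsim R$ are trivial, so \eqref{eq:density2} and \eqref{eq:density1} hold for $\nu^*$ as well.
\end{itemize}
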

Our main result is the following.
\begin{theorem}\label{thm:main}
Given $\Theta\subset \R^n$ compact, $\beta=\dimM(\Theta)$ and $\gamma:\R^n\times[0,1)\times\Theta\to\R^n$, with $\gamma(x,0,\theta)=x$, satisfying \eqref{C1},\eqref{C2} and \eqref{C3} we have
\begin{equation}\label{eq:main}
\norm{\Mtheta f}_{L^2(B^n_1)}\ \lesssim_s\ \norm{f}_{H^s(\R^n)},
\end{equation}
for all $f\in H^s(\R^n)$ whenever
\begin{equation}\label{eq:thresholds}
s\ >\sigma:= \
\begin{cases}
	\dfrac{n(\beta+1)}{2(n+1)}, & 0\le\beta\le n-1,\\[2mm]
	\dfrac{\beta}{2}+\dfrac{1}{2(n+1)}, & n-1<\beta\le n-\dfrac{1}{n+1},\\[2mm]
	\dfrac n2, & n-\dfrac{1}{n+1}<\beta\le n.
\end{cases}
\end{equation}
\end{theorem}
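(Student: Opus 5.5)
We reduce \eqref{eq:main} to a frequency-localized estimate by Littlewood--Paley decomposition: it suffices to show, for $\supp\widehat f\subset A(\lambda)$ and every $\eps>0$,
\begin{equation*}
\norm{\Mtheta f}_{L^2(B^n_1)}\lesssim_\eps \lambda^{\sigma+\eps}\norm f_2 ,
\end{equation*}
and then sum over dyadic $\lambda$ using $s>\sigma$. The low frequencies $\lambda\lesssim1$ are trivial by Sobolev embedding. The third regime $\sigma=\frac n2$ also follows from Sobolev embedding, since $\abs{e^{i\tau\Delta}f}\le\norm{\widehat f}_1\lesssim\lambda^{n/2}\norm f_2$. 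So the work concerns the first two regimes.

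Next we apply Lee's temporal localization. It lets us restrict $\tau$ to an interval of length $\sim\lambda^{-1}$, at an $\eps$-loss, thanks to \eqref{C2}. Parabolic rescaling $x\mapsto\lambda x$, $t\mapsto\lambda^2t$ then turns the problem into one about $e^{it\Delta}h$ with $\supp\widehat h\subset B^n_2$, on the box $B^n_\lambda\times[0,\lambda]$, so we take $R=\lambda$. The rescaled curves $\tilde\gamma(x,t,\theta)=\lambda\gamma(x/\lambda,t/\lambda^2,\theta)$ still satisfy \eqref{C1} uniformly. By \eqref{C2} they move at most $O(1)$ over $t\in[0,\lambda]$, and by \eqref{C3} changing $\theta$ by $\lambda^{-1}$ moves the point by $O(1)$. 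We then cover $\Theta$ by $N(\Theta,\lambda^{-1})\lesssim_\eps\lambda^{\beta+\eps}$ balls of radius $\lambda^{-1}$ with centers $\theta_j$. By the locally constant property (since $\widehat h$ has compact support), $\abs{e^{it\Delta}h}$ is essentially constant on unit cubes, so the supremum is dominated by the values on $O(1)$-neighbourhoods of the points $(\tilde\gamma(x,t,\theta_j),t)$.

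We discretize the $x$-variable into lattice unit cubes $Q^n_1(v)$, $v\in\Z^n\cap B^n_\lambda$. For each $v$ we pick a unit cube $Q_v\subset\R^{n+1}$ where the maximum (over $t$ and $j$) is essentially attained, and set $X=\bigcup_v Q_v$. This gives
\begin{equation*}
\norm{\text{max}}_{L^2(B^n_\lambda)}^2\lesssim\sum_v\norm{e^{it\Delta}h}_{L^2(Q_v)}^2\lesssim\norm{e^{it\Delta}h}^2_{L^2(X)}
\end{equation*}
(with multiplicity bounded by a counting argument). The key step is the counting estimate for $\nu_{X,\alpha}$. A ball $B^{n+1}_r(w)$ contains cube $Q_v$ only if $(\tilde\gamma(v,t,\theta_j),t)\in B_{r+O(1)}(w)$ for some $j$ and some $t$ with $\abs{t-w_{n+1}}\le r$. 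For fixed $j$ and $t$, \eqref{C1} shows at most $\sim r^n$ base points $v$ qualify. Varying $t$ adds nothing, since the curves move only $O(1)$. The $\theta_j$ for which this happens lie in a set of diameter $\lesssim r/\lambda$, hence number $\lesssim\min\{(r)^{\beta+\eps},\lambda^{\beta+\eps}\}$ (using Minkowski dimension at scale $\lambda^{-1}$ inside balls of radius $r/\lambda$; this local covering bound needs care, and if unavailable one uses the global count $\lambda^\beta$). Combined with the trivial bound $\#\lesssim r^n$, since the $v$ are distinct and a ball of radius $r$ with $r\le\lambda$ contains at most $r^n$ base points... but here base points may cluster after deformation only through the $\theta$-dependence, giving $\#\{v\}\lesssim\min\{r^n\lambda^{\beta},\lambda^n\}$.

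Finally we feed this into Proposition~\ref{prop:DZ} and optimize over $\alpha$. Taking $\alpha=n$, so that $\nu\lesssim\lambda^{\beta}$, gives $\lambda^{\frac{\beta}{n+1}+\frac{n}{2(n+1)}}$ for the $L^2$ norm. Since the estimate is in $L^2$ this corresponds, after undoing the rescaling (the factor $\lambda^{-n/2}$ in $x$, and $\lambda$ from time), to $\sigma=\frac{n(\beta+1)}{2(n+1)}$ in the range $\beta\le n-1$. When $\beta>n-1$ the bound $\#\le\lambda^n$ is the better one: choosing $\alpha$ near $n+1$ and combining the two counts yields $\frac\beta2+\frac1{2(n+1)}$. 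The main obstacle is the counting step, namely bounding $\nu_{X,\alpha}$ uniformly in $r$ using only \eqref{C1}--\eqref{C3}, and then balancing the exponents correctly. I would first try to carry out the bookkeeping with $\alpha=n$ and the pure count $\#\le\min(r^n\lambda^\beta,\lambda^{n})$, and then adjust $\alpha$ in the second regime.
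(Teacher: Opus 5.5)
Your reductions (Littlewood--Paley, Lee's localization, parabolic rescaling, the locally constant property, one cube per base point) match the paper's. There is, however, a genuine gap at the step $\sum_v\norm{e^{it\Delta}h}_{L^2(Q_v)}^2\lesssim\norm{e^{it\Delta}h}_{L^2(X)}^2$. This holds only with constant $\max_Q\#\{v:Q_v=Q\}$, and that multiplicity is not $O(1)$. A single unit cube can be reached by the evolved sets of as many as $\sim\lambda^{\beta}$ base points. This happens already for $\gamma(x,\tau,\theta)=x+\tau\theta$ with $\tau\sim1$, where after rescaling the evolved set of $v$ is $v+\lambda\tau\Theta$. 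If $e^{it\Delta}h$ peaks on that cube, all those base points select it, and paying the crude bound $\lambda^{\beta}$ costs a full $\lambda^{\beta/2}$. The missing idea is to pigeonhole the multiplicity. Set $X_m=\bigcup\{Q:\mu(Q)\sim m\}$. Each cube of $X_m$ uses up $m$ base points, while the number of base points whose evolved set meets an $r$-ball is controlled, so (with the right count) $\nu_{X_m,\alpha}\lesssim\lambda^{\beta+\eps}/m$. Du--Zhang then bounds the contribution of class $m$ by $m^{\frac{n-1}{n+1}}\lambda^{\frac{2\beta+\alpha}{n+1}+\eps}\norm h_2^2$. This only partial cancellation, summed up to $m\sim\lambda^{\beta}$, is where $\sigma$ comes from.

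Your counting also has to be sharper. The claim that the relevant $\theta_j$ lie in a set of diameter $\lesssim r/\lambda$ is false, since \eqref{C3} is only an upper bound (take $\gamma$ independent of $\theta$). What is true is the converse. Cover $\Theta$ at scale $r/\lambda$ rather than $\lambda^{-1}$; by \eqref{C1} and \eqref{C3}, all base points reaching $B_r(w)$ through one such cluster of directions lie in a single ball of radius $O(r)$. Hence at most $(\lambda/r)^{\beta+\eps}r^n\le\lambda^{\beta+\eps}r^{n-\beta}$ base points reach an $r$-ball. The paper phrases this via the $C_1C_3\lambda$-Lipschitz maps $\theta\mapsto\Psi_\theta^{-1}(v)$, where $\Psi_\theta=\tilde\gamma(\cdot,\tilde t,\theta)$. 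This count beats your $r^n\lambda^{\beta}$ by a factor $r^{\beta}$, and it is what makes $\alpha=n-\beta$ the right choice: it gives $\nu_{X_m,n-\beta}\lesssim\lambda^{\beta+\eps}/m$ and a total of $\lambda^{\frac{n(\beta+1)}{n+1}+2\eps}$ for the squared norm. With your count and $\alpha=n$, even after pigeonholing, you only reach $\frac\beta2+\frac n{2(n+1)}$.

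Your final matching does not hold either. The bound $\nu\lesssim\lambda^\beta$ at $\alpha=n$ gives the exponent $\frac{2\beta+n}{2(n+1)}$, which differs from $\sigma$ by $\frac{(n-2)\beta}{2(n+1)}$. That discrepancy depends on $\beta$, so no rescaling factor can explain it; in fact the rescaling is $L^2$-isometric and Lee's lemma costs only $\lambda^{\eps}$. For $n\ge3$ this exponent would even beat the theorem, which is a symptom of the dropped multiplicity. Finally, for $\beta>n-1$, taking $\alpha$ near $n+1$ is the wrong end, since it returns essentially the trivial energy bound. The paper takes the smallest admissible value $\alpha=1$, where $\lambda^{\beta+\eps}r^{n-\beta-1}\le\lambda^{\beta+\eps}$ still gives $\nu_{X_m,1}\lesssim\lambda^{\beta+\eps}/m$.
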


By a standard argument, this yields a convergence result under the same hypotheses.

\begin{corollary}
Given $\Theta\subset \R^n$ compact, $\beta=\dimM(\Theta)$ and $\gamma:\R^n\times[0,1)\times\Theta\to\R^n$, with $\gamma(x,0,\theta)=x$, satisfying \eqref{C1},\eqref{C2} and \eqref{C3}, we have
\begin{equation}\label{eq:aeconverg}
\lim _{\substack{(y, t) \rightarrow(x, 0) \\ y \in \Gamma_{x,t},\ t>0}} e^{i t \Delta} f(y)=f(x) \;\text { a.e. } x\in\R^n
\end{equation}
for all $f\in H^s(\R^n)$ with $s>\sigma$ and $\sigma$ as defined in \eqref{eq:thresholds}.
\end{corollary}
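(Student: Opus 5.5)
The plan is the usual density argument: prove convergence everywhere for Schwartz data, and use the maximal estimate \eqref{eq:main} of Theorem~\ref{thm:main} to pass to all of $H^s(\R^n)$. The one point needing care is that \eqref{eq:main} is stated only on $B^n_1$. I will use translation invariance to get the same bound on every unit ball $B^n_1(x_0)$, with a constant independent of $x_0$.

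\emph{Step 1: smooth data.} Let $g$ be a Schwartz function. Then $(y,t)\mapsto e^{it\Delta}g(y)$ is continuous on $\R^n\times[0,\infty)$ and equals $g$ at $t=0$. If $y\in\Gamma_{x,t}$, then $y=\gamma(x,t,\theta)$ for some $\theta\in\Theta$. By \eqref{C2} and $\gamma(x,0,\theta)=x$,
\[
\abs{y-x}=\abs{\gamma(x,t,\theta)-\gamma(x,0,\theta)}\le C_2t .
\]
So the admissible approach region is contained in a cone at $x$. Continuity then gives $e^{it\Delta}g(y)\to g(x)$ for every $x$, in the limit \eqref{eq:aeconverg}.

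\emph{Step 2: uniform local maximal bound.} Fix $x_0\in\R^n$ and set
\[
\tilde\gamma(x,t,\theta):=\gamma(x+x_0,t,\theta)-x_0 .
\]
This family satisfies \eqref{C1}, \eqref{C2}, \eqref{C3} with the same constants, and $\tilde\gamma(x,0,\theta)=x$. Since $e^{it\Delta}$ commutes with translations,
\[
e^{i\tau\Delta}f(\gamma(x+x_0,\tau,\theta))=e^{i\tau\Delta}\big(f(\cdot+x_0)\big)(\tilde\gamma(x,\tau,\theta)) .
\]
The $H^s$ norm is translation invariant, and the implicit constant in Theorem~\ref{thm:main} depends only on $n$, $C_1,C_2,C_3$, $\Theta$ and $s$. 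Hence
\[
\norm{\Mtheta f}_{L^2(B^n_1(x_0))}\lesssim_s\norm f_{H^s}
\]
uniformly in $x_0$. It therefore suffices to prove convergence a.e.\ on each $B^n_1(x_0)$, and countably many such balls cover $\R^n$. First prove this bound for Schwartz $f$, where $\Mtheta f$ is finite everywhere. Then extend $\Mtheta$ to all of $H^s$ as a sublinear operator by density; the sublinear bound $\abs{\Mtheta f_1-\Mtheta f_2}\le\Mtheta(f_1-f_2)$ justifies the extension. For measurability, note that $\tau$ and $\theta$ may be restricted to countable dense subsets, by continuity in the smooth case.

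\emph{Step 3: approximation.} Let $f\in H^s$ with $s>\sigma$, and let $\lambda,\delta>0$. Choose a Schwartz $g$ with $\norm{f-g}_{H^s}<\delta$, and set $h=f-g$. By Step 1,
\[
\limsup_{(y,t)\to(x,0),\ y\in\Gamma_{x,t}}\abs{e^{it\Delta}f(y)-f(x)}\le \Mtheta h(x)+\abs{h(x)} .
\]
Let $E_\lambda$ be the set of $x\in B^n_1(x_0)$ where the left side exceeds $2\lambda$. By Chebyshev, Step 2, and $\norm h_2\le\norm h_{H^s}$,
\[
\abs{E_\lambda}\le\lambda^{-2}\big(\norm{\Mtheta h}_{L^2(B^n_1(x_0))}^2+\norm h_{2}^2\big)\lesssim_s\lambda^{-2}\delta^2 .
\]
Letting $\delta\to0$ gives $\abs{E_\lambda}=0$ for every $\lambda>0$. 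Taking $\lambda=1/k$ with $k\in\mathbb{N}$, and a countable cover of $\R^n$ by balls, yields \eqref{eq:aeconverg} for a.e.\ $x\in\R^n$.

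I expect no real obstacle here. The only points needing attention are the uniformity of constants under translation in Step 2 and the routine measurability and density justification of $\Mtheta$ on $H^s$.
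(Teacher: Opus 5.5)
Your proposal is correct and is the standard density argument that the paper invokes without writing out: convergence for Schwartz data (with \eqref{C2} forcing $y\to x$ as $t\to0^+$), the maximal bound of Theorem~\ref{thm:main}, and a Chebyshev estimate on the exceptional set. Your translation step $\tilde\gamma(x,t,\theta)=\gamma(x+x_0,t,\theta)-x_0$ is correct. It gives the bound on every $B^n_1(x_0)$ with constants depending only on $n,C_1,C_2,C_3,\Theta,s$, a detail the paper leaves implicit.
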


The threshold $\sigma$ is the one the method produces when Proposition~\ref{prop:DZ} is applied at the natural exponent $\alpha=n-\beta$ . This requires $\alpha\ge1$, that is $\beta\le n-1$. The second range is a special case, in which $\alpha=n-\beta<1$ is not a valid choice to apply Proposition \ref{prop:DZ}. Here we substitute the smallest admissible value $\alpha=1$. The third range is the trivial threshold given by Sobolev embedding. Notice that, at $\beta=n-1$ we have $\sigma=\tfrac{n^2}{2(n+1)}$, and $\sigma = \frac n2$ at $\beta= n-\tfrac1{n+1}$, so the exponent in Theorem~\ref{thm:main} is a continuous, increasing function of $\beta$ on $[0,n]$.

\begin{remark}[Comments on the exponent and known cases]\label{rem:sanity}
The threshold $\sigma$ matches some cases where an answer is independently known:
\begin{itemize}
\item[(i)] At $n=1$, the best known value is $\tfrac{\beta+1}{4}$, the theorem of Cho--Lee--Vargas \cite{CLV} (see also Shiraki \cite{Shi} for the fractional Schrödinger operator), conjectured sharp for $\beta\in(0,1)$ and known to be sharp for $\beta\in\{0,1\}$. Note that $\tfrac{n(\beta+1)}{2(n+1)} = \tfrac{\beta+1}{4}$ at $n=1$. However our $\sigma$ only takes that value at $\beta\in\{0,1\}$.
\item[(ii)] At $\beta=0$, when there is a single curve, $\sigma=\tfrac{n}{2(n+1)}$ is the sharp threshold for the almost everywhere convergence problem, by Du--Guth--Li \cite{DGL}, Du--Zhang \cite{DZ} and the counterexample of Bourgain \cite{Bou}.
\item[(iii)] At $n=2$, the best known value is $\tfrac{\beta+1}{3}$ obtained by Li--Wang--Yan \cite{LWY}. Our $\sigma =\tfrac{n(\beta+1)}{2(n+1)}= \tfrac{\beta+1}{3}$ for all $\beta\in[0,1]\cup\{2\}$ and $\sigma>\tfrac{\beta+1}{3}$ for all $\beta\in(1,2)$.
\item[(iv)] At $\beta=n$, the sufficiency threshold $s>\tfrac n2$ is given by Sobolev embedding. Convergence on a cone of directions requires $s> \frac n2$ by Sjögren--Sjölin \cite{SS}. Our argument in Section \refeq{sec:highbeta} allows us to reach the sufficiency of $s>\tfrac n2+\tfrac1{2(n+1)}$, not lower.
\end{itemize}
\end{remark}

The threshold $\sigma$ improves known thresholds for $n\geq 3$. Consider the 
maximal estimate
\begin{equation}\label{eq:fixedparam}
\Bigl\|\sup_{0<t<1}\abs{e^{it\Delta}f}\Bigr\|_{L^p(B^n_1)}\ \le\ C_s\norm f_{H^s(\R^n)} .
\end{equation}
Li--Wang--Yan prove two things about it. First, if \eqref{eq:fixedparam} holds for some $p>1$ and every $s>\tfrac{n}{2(n+1)}$, then
\begin{equation}\label{eq:LWYcurve}
\norm{\Mtheta f}_{L^p(B^n_1)}\ \le\ C_s\norm f_{H^s(\R^n)},
\end{equation}
holds for every $s>\tfrac\beta p+\tfrac{n}{2(n+1)}$. This is their Theorem 1.4. 
Second, if such a $p$ satisfies \eqref{eq:fixedparam}, then necessarily
\begin{equation}\label{eq:pbound}
p\ \le\ \frac{2(n+1)}{n} ,
\end{equation}
which is their Theorem 1.5. It is known that \eqref{eq:fixedparam} holds for $s>\frac{n}{2(n+1)}$ at $p=\tfrac{2(n+1)}{n}$ for $n=1$ from Kenig--Ponce--Vega \cite{KPV} and $n=2$ from Du--Guth--Li \cite{DGL}. For $n\geq 3$, Du--Kim--Wang--Zhang \cite{DKWZ}
proved that \eqref{eq:fixedparam} fails for some $s>\frac n{2(n+1)}$ whenever $p$ belongs to a certain range containing
\begin{equation}\label{eq:lowerp}
	2+\frac4{(n-1)(n+2)}< p \leq \frac{2(n+1)}{n}.
\end{equation}

In the range $0\le\beta\le n-1$, our sufficiency threshold for convergence \eqref{eq:aeconverg} to hold is exactly $\sigma=\tfrac{n}{2(n+1)}+\tfrac{\beta}{p}$ at the extreme value $p=\tfrac{2(n+1)}{n}$ allowed by \eqref{eq:pbound}. The proof below never uses \eqref{eq:fixedparam}. We obtain unconditionally, the best possible regularity threshold that \eqref{eq:pbound} would allow in $n=1$ and $n=2$. For $n\geq3$, we obtain a threshold that Theorem 1.4 of Li--Wang--Yan cannot obtain because of \eqref{eq:lowerp}. For $n\geq 2$, at $p=2$, \eqref{eq:fixedparam} is known to hold for all $s>\frac{n}{2(n+1)}$ from \cite{DGL} and \cite{DZ}. This gives a sufficiency threshold for \eqref{eq:aeconverg} of $ \frac{n}{2(n+1)} +\frac{\beta}{2} > \sigma$ for $\beta>0$.

In the remaining range $n-1<\beta< n$ we do not reach $\tfrac{n(\beta+1)}{2(n+1)}$. 

\section{The locally constant property}\label{sec:lcp}

 We use the Locally Constant Property in the following form.

\begin{proposition}[Locally constant property]\label{prop:lcp}
Let $c\ge1$ and $K\ge1$, and let $f\in L^2(\R^n)$ with $\supp\widehat f\subset B^n_c(0)$. For $\mathfrak l\in\Z^n$ write
\begin{equation}\label{eq:htrans}
\widehat{f_{\mathfrak l}}(\xi)=e^{2\pi i\mathfrak l\cdot\xi}\,\widehat f(\xi).
\end{equation}
 Let $(x,t)\in\R^{n+1}$ and let $\mathcal Q\subset\R^{n+1}$ be a unit cube such that $\abs{(x,t)-(x',t')}\le K$ for all $(x',t')\in\mathcal Q$. Then, for every $p\ge1$,
\begin{equation}\label{eq:lcp-cubes}
\bigl|e^{it\Delta}f(x)\bigr|^{p}\ 
\lesssim\
 \sum_{\mathfrak l\in\Z^{n}}\frac{1}{(1+\abs{\mathfrak l})^{n+1}}\int_{\mathcal Q}\bigl|e^{is\Delta}f_{\mathfrak l}(y)\bigr|^{p}\,dy\,ds ,
\end{equation}
where the implicit constant may depend on $c$ and $K$.
\end{proposition}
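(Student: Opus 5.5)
The plan is to write $e^{it\Delta}f(x)$ as a convolution of $e^{is\Delta}f$ at a nearby point $(y,s)\in\mathcal Q$ with a Schwartz kernel, and then to expand that kernel in a Fourier series to pass to the translates $f_{\mathfrak l}$. Fix a smooth bump $\psi\in C_c^\infty(\R^n)$ with $\psi\equiv1$ on $B^n_c(0)$ and $\supp\psi\subset B^n_{2c}(0)$. For $(y,s)\in\mathcal Q$ write $(x,t)=(y,s)+(a,b)$, where $\abs{(a,b)}\le K$. Since $\supp\widehat f\subset B^n_c$, we have
\begin{equation*}
e^{it\Delta}f(x)=\int_{\R^n} e^{2\pi i y\cdot\xi}e^{-4\pi^2 is\abs\xi^2}\,\widehat f(\xi)\,m_{a,b}(\xi)\,d\xi,\qquad m_{a,b}(\xi):=\psi(\xi)\,e^{2\pi i a\cdot\xi}e^{-4\pi^2 ib\abs\xi^2}.
\end{equation*}
The multiplier $m_{a,b}$ is smooth and supported in $B^n_{2c}$, and all of its derivatives are bounded uniformly in $\abs{(a,b)}\le K$, with constants depending only on $c$ and $K$.

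Next, view $B^n_{2c}$ as sitting inside a cube of side $L\sim c$. Rescale so that the Fourier series lives on the lattice $\Z^n$: after replacing $\xi$ by $\xi/L$, or equivalently by taking $\psi$ supported in a unit-size cube after dilation, we may absorb the dilation into the constants. Expand $m_{a,b}(\xi)=\sum_{\mathfrak l\in\Z^n}c_{\mathfrak l}(a,b)\,e^{2\pi i\mathfrak l\cdot\xi}$ on the support of $\widehat f$. Integrating by parts $N$ times gives $\abs{c_{\mathfrak l}(a,b)}\lesssim_{c,K}(1+\abs{\mathfrak l})^{-N}$ uniformly in $(a,b)$. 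Substituting the expansion yields
\begin{equation*}
e^{it\Delta}f(x)=\sum_{\mathfrak l}c_{\mathfrak l}(a,b)\,e^{is\Delta}f_{\mathfrak l}(y),
\end{equation*}
where $f_{\mathfrak l}$ is as in \eqref{eq:htrans}, with the translate taken in the rescaled lattice, and this identity holds for every $(y,s)\in\mathcal Q$.

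Now apply Hölder's inequality in $\mathfrak l$ with weights $(1+\abs{\mathfrak l})^{-N}$. Writing $N=N_1+N_2$ with $N_2p'$ large enough that the dual sum converges, we get
\begin{equation*}
\abs{e^{it\Delta}f(x)}^p\lesssim\sum_{\mathfrak l}(1+\abs{\mathfrak l})^{-N_1p}\,\abs{e^{is\Delta}f_{\mathfrak l}(y)}^p.
\end{equation*}
Choosing $N$ large makes the exponent $N_1p\ge n+1$. Finally, average this pointwise bound over $(y,s)\in\mathcal Q$. Since $\abs{\mathcal Q}=1$, this gives \eqref{eq:lcp-cubes}.

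I expect the main obstacle to be purely bookkeeping. First, the Fourier expansion must be set on a period cube containing $\supp\psi$, so the integer translates are really translates by $L\mathfrak l$. I would handle this either by redefining the expansion on a cube of side $L$ and noting that the decay in $\mathfrak l$ is unaffected, or, to match the statement exactly, by choosing $\psi$ so that its support lies in a cube of side $1$ after a preliminary dilation of $f$ whose cost depends only on $c$. Second, the case $p=1$ needs no Hölder step: the decay of $c_{\mathfrak l}$ applies directly.
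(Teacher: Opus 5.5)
The paper does not prove this proposition; it quotes it from the literature. Your analytic core is the standard argument and is fine: Fourier-expand $m_{a,b}$, use coefficient decay uniform in $\abs{(a,b)}\le K$, apply Hölder in the summation index, and average over $\mathcal Q$. The gap is in the lattice bookkeeping you yourself flagged, and both of your proposed fixes are wrong.

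Because $c\ge1$, the support $B^n_{2c}$ of $m_{a,b}$ does not fit in a fundamental domain of $\Z^n$, so you must expand on a cube of side $L>4c$. The resulting exponentials are $e^{2\pi i(\mathfrak k/L)\cdot\xi}$ with $\mathfrak k\in\Z^n$. So the functions that appear are translates of $f$ by $\mathfrak k/L\in L^{-1}\Z^n$, a \emph{finer} lattice than $\Z^n$, not translates by $L\mathfrak l$. Translates by $L\Z^n\subset\Z^n$, with $L$ an integer, would indeed have been harmless. Noting that the coefficient decay is unaffected does not make these translates integral. A preliminary dilation does not help either. One has $e^{it\Delta}f(x)=e^{iL^2t\Delta}g(Lx)$ with $g=f(\cdot/L)$, so integer translates of $g$ are again translates of $f$ by $L^{-1}\Z^n$, and $\mathcal Q$ becomes an $L\times\dots\times L\times L^2$ box. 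Also, no $\psi\equiv1$ on $B^n_c$ can be supported in a unit cube. For the paper's application fractional translates would in fact suffice, since only $\norm{h_{\mathfrak l}}_2=\norm h_2$ and the Fourier support are used, but the statement asks for $\mathfrak l\in\Z^n$.

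The missing step is where the integral over $\mathcal Q$ does real work; as written you only use it to turn a pointwise bound into an average. After averaging you have
\[
\abs{e^{it\Delta}f(x)}^p\lesssim\sum_{\mathfrak k\in\Z^n}(1+\abs{\mathfrak k})^{-M}\int_{\mathcal Q+(\mathfrak k/L,0)}\abs{e^{is\Delta}f(y)}^p\,dy\,ds .
\]
For an axis-parallel unit cube and $v\in\R^n$ one has $\mathcal Q+(v,0)\subset\bigcup_{e\in\{0,1\}^n}\bigl(\mathcal Q+(\lfloor v\rfloor+e,0)\bigr)$, where $\lfloor v\rfloor$ is the componentwise integer part. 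Hence the $\mathfrak k$-th integral is at most $\sum_{\mathfrak l\in\lfloor\mathfrak k/L\rfloor+\{0,1\}^n}\int_{\mathcal Q}\abs{e^{is\Delta}f_{\mathfrak l}}^p$.

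Now exchange the sums. A given $\mathfrak l\in\Z^n$ is reached by at most $2^n\lceil L\rceil^n$ indices $\mathfrak k$, all with $\abs{\mathfrak k/L-\mathfrak l}\le\sqrt n$, hence $\abs{\mathfrak k}\ge L(\abs{\mathfrak l}-\sqrt n)$. So $\mathfrak l$ receives total weight $\lesssim_{L,M}(1+\abs{\mathfrak l})^{-M}$. Taking $M\ge n+1$ gives \eqref{eq:lcp-cubes} with $\mathfrak l\in\Z^n$ and constants depending only on $c$ and $K$. This conversion cannot be done pointwise at a single $(y,s)$, which is why the averaging over $\mathcal Q$ is essential.
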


This is standard, and we do not prove it. It is Proposition 3.2 of \cite{MSV}. More general versions of this principle are Lemma~2.1 in Tao \cite{Tao} and Lemma~2.10 in Christ \cite{Chr}.


We will not use Proposition~\ref{prop:lcp} directly, but rather the following consequence.

\begin{corollary}\label{cor:lcp}
Let $h\in L^2(\R^n)$ with $\supp \hat h \subset A(1)+B^n_{1/100}$. Let $E\subset\R^{n+1}$ be non-empty. Then there is a lattice unit cube $\mathcal Q\subset\R^{n+1}$, containing a point of $E$, such that
\begin{equation}\label{eq:lcp-sup}
\sup_{(z,t)\in E}\abs{e^{it\Delta}h(z)}^2\ \lesssim\ \sum_{\mathfrak l\in\Z^{n}}\frac{1}{(1+\abs{\mathfrak l})^{n+1}}\,\norm{e^{i\cdot\Delta}h_{\mathfrak l}}_{L^2(\mathcal Q)}^2 .
\end{equation}
\end{corollary}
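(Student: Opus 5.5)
The plan is to deduce the corollary directly from Proposition~\ref{prop:lcp} with $p=2$. Two issues need care. The supremum over $E$ has to be replaced by a single point, and that point must be tied to a \emph{lattice} unit cube that meets $E$ and lies at bounded distance from it.

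First we check the hypotheses of Proposition~\ref{prop:lcp}. Since $\supp\widehat h\subset A(1)+B^n_{1/100}\subset B^n_{3}(0)$, we may take $c=3$. We fix $K=2\sqrt{n+1}$, so the implicit constant depends only on $n$. We may assume the supremum $S:=\sup_{(z,t)\in E}\abs{e^{it\Delta}h(z)}^2$ is positive, since otherwise there is nothing to prove. We may also assume $S$ is finite. This holds because $\widehat h$ is compactly supported and in $L^2$, hence in $L^1$, so $\abs{e^{it\Delta}h}\le\norm{\widehat h}_{1}<\infty$.

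Next we pick a point $(z_0,t_0)\in E$ with $\abs{e^{it_0\Delta}h(z_0)}^2\ge S/2$. We choose a lattice unit cube $\mathcal Q=Q^{n+1}_1(w)$ with $w\in\Z^{n+1}$ and $(z_0,t_0)\in\mathcal Q$. Such a cube exists because the closed lattice unit cubes cover $\R^{n+1}$; for instance, take $w$ to be a nearest integer point to $(z_0,t_0)$. Then $\mathcal Q$ contains the point $(z_0,t_0)$ of $E$. Every $(x',t')\in\mathcal Q$ satisfies $\abs{(z_0,t_0)-(x',t')}\le\operatorname{diam}\mathcal Q=\sqrt{n+1}\le K$. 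Applying \eqref{eq:lcp-cubes} at $(x,t)=(z_0,t_0)$ with $p=2$ gives
\begin{equation*}
S\le 2\abs{e^{it_0\Delta}h(z_0)}^2\lesssim\sum_{\mathfrak l\in\Z^n}\frac{1}{(1+\abs{\mathfrak l})^{n+1}}\int_{\mathcal Q}\abs{e^{is\Delta}h_{\mathfrak l}(y)}^2\,dy\,ds,
\end{equation*}
and the right-hand side is exactly $\sum_{\mathfrak l}(1+\abs{\mathfrak l})^{-(n+1)}\norm{e^{i\cdot\Delta}h_{\mathfrak l}}^2_{L^2(\mathcal Q)}$, which is \eqref{eq:lcp-sup}.

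The only step needing care is the near-maximizer. The supremum need not be attained, since $E$ is arbitrary and not necessarily compact. This costs only the factor $2$ and requires $S<\infty$, which is the reason for the $L^1$ bound above. Otherwise the argument is a direct specialization, and all constants depend only on $n$.
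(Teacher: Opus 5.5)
Your proof is correct and follows the paper's argument essentially step for step: an $L^1$ bound on $\widehat h$ makes the supremum finite, you take a near-maximizer at the cost of a factor $2$, choose a lattice unit cube containing it, and apply Proposition~\ref{prop:lcp} with $p=2$. The only differences are your harmless choices $c=3$ and $K=2\sqrt{n+1}$ in place of the paper's $c=2+\frac1{100}$ and $K=\sqrt{n+1}$.
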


\begin{proof}
We observe that $e^{it\Delta}h(\cdot)$ is bounded. Indeed, since $\widehat h$ is supported on a bounded set, Cauchy--Schwarz and Plancherel give
\begin{equation}\label{eq:L1L2}
	\norm{\widehat h}_{1}\ \le\ \abs{\supp\widehat h}^{1/2}\,\norm{\widehat h}_{2}\ \lesssim\ \norm h_2 ,
\end{equation}
so $\widehat h\in L^1$ and $\abs{e^{it\Delta}h(\cdot)}\lesssim\norm{\widehat h}_{L^1}\lesssim\norm h_2$ uniformly in $t$.
	
Since $e^{it\Delta}h(\cdot)$ is bounded, the supremum in \eqref{eq:lcp-sup} is finite, so we may pick $a_*=(x_*,t_*)\in E$ with
\begin{equation}\label{eq:halfsup}
\bigl|e^{it_*\Delta}h(x_*)\bigr|^2\ \ge\ \dfrac12\sup_{(z,t)\in E}\abs{e^{it\Delta}h(z)}^2.
\end{equation}
Let $\mathcal Q$ be a lattice unit cube containing $a_*$. Every point of $\mathcal Q$ lies at distance $\sqrt{n+1}$ or less from $a_*$. Notice that $A(1)+B^n_{1/100}\subset B^n_{2}+B^n_{1/100}$. We apply Proposition~\ref{prop:lcp} with $p=2$, $c=2+\frac1{100}$, and $K=\sqrt{n+1}$, and obtain \eqref{eq:lcp-sup}.
\end{proof}

\section{Reductions}\label{sec:red}
In this section, we reduce \eqref{eq:main} to a simpler statement using the Lipschitz structure of the curve and the standard argument. That is, we use Littlewood--Paley, parabolic rescaling and temporal localization. Then, we perform a further reduction related to the specific curves we are considering.
\subsection{Littlewood--Paley}\label{ssec:LP}
Fix $\psi\in C^\infty_c(B^n_1)$ with $\psi\equiv1$ on $B^n_{1/2}$. Set $\psi_j:=\psi(2^{-j}\cdot)$ and $\varphi_{j+1}:=\psi_{j+1}-\psi_j$ for $j\ge0$, so that $\supp\varphi_j\subset A(2^{j-1})$ and $\psi_0+\sum_{j\ge1}\varphi_j\equiv1$. Consider the following inequality. For every $\eps>0$ there exists $C_\eps>0$ such that for every $R\gg1$ and every $g$ with $\supp\widehat g\subset A(R)$,
\begin{equation}\label{eq:single}
\norm{\Mtheta g}_{L^2(B^n_1)}\ \le\ C_\eps\,R^{\sigma+\eps}\,\norm g_2 .
\end{equation}

Let us prove that \eqref{eq:main} is a consequence of \eqref{eq:single}. First, $\norm{\Mtheta(\check\psi_0*f)}_{L^2(B^n_1)}\lesssim\norm{\widehat{f}\,\psi_0}_{L^1}\lesssim\norm f_2$. Thus, using the sublinearity of $\Mtheta$ and then Cauchy--Schwarz,
\begin{align}
\norm{\Mtheta f}_{L^2(B^n_1)}^2
&\ \lesssim\ \|f\|_2^2 + \Bigl(\sum_{j\geq 1}\norm{\Mtheta(\check\varphi_j*f)}_{L^2(B^n_1)}\Bigr)^{\!2}
\label{eq:LP}\\
&\ \lesssim\ \|f\|_2^2 + \Bigl(\sum_{j\geq 1}2^{-2j\eps}\Bigr)\sum_{j\geq 1}2^{2j\eps}\norm{\Mtheta(\check\varphi_j*f)}_{L^2(B^n_1)}^2,
\notag \\ 
\intertext{and now, applying \eqref{eq:single} at $R=2^{j-1}$ to each summand,}
&\ \lesssim_\eps\ \|f\|_2^2 + \sum_{j\geq 1}2^{2j(\sigma+2\eps)}\norm{\check\varphi_j*f}_2^2\notag
\sim\ \norm f^2_{H^{\sigma+2\eps}}.\notag
\end{align}

\subsection{Parabolic rescaling}\label{ssec:rescale}
Fix $R\gg1$ and $g$ with $\supp\widehat g\subset A(R)$. Write
\begin{equation}\label{eq:prerescale}
\norm{\Mtheta g}_{L^2(B^n_1)}
=\Bigl(\int_{B^n_1}\ \sup_{(\tau,\theta)\in[0,1)\times\Theta}\abs{e^{i\tau\Delta}g(\gamma(x,\tau,\theta))}^2\,dx\Bigr)^{1/2}.
\end{equation}
Change variables $y=Rx$ and $t'=R^2\tau$, 
and define $h:=R^{-n/2}g(R^{-1}\cdot)$, so that $\widehat h=R^{n/2}\widehat g(R\,\cdot)$ has support in $A(1)$ and $\norm h_2=\norm g_2$. Let us give a name to the rescaled curve that arises,
\begin{equation}\label{eq:gammaR}
\gamma_R(y,t',\theta):=R\,\gamma\bigl(R^{-1}y,\ R^{-2}t',\ \theta\bigr).
\end{equation}
Now, $R\,\gamma(x,\tau,\theta)=\gamma_R(y,t',\theta)$ and $\gamma_R(y,0,\theta)=y$. By \eqref{C1}--\eqref{C3}, the rescaled curve satisfies, uniformly in $R$,
\begin{align}
C_1^{-1}\abs{y-y'}\ \le\ \abs{\gamma_R(y,t',\theta)-\gamma_R(y',t',\theta)}\ &\le\ C_1\abs{y-y'},\tag{C1$_R$}\label{C1R}\\
\abs{\gamma_R(y,t',\theta)-\gamma_R(y,t'',\theta)}\ &\le\ C_2R^{-1}\abs{t'-t''},\tag{C2$_R$}\label{C2R}\\
\abs{\gamma_R(y,t',\theta)-\gamma_R(y,t',\theta')}\ &\le\ C_3R\,\abs{\theta-\theta'}.\tag{C3$_R$}\label{C3R}
\end{align}
The change of variables yields
\begin{align}\label{eq:rescaled-explicit}
	\norm{\Mtheta g}_{L^2(B^n_1)}
	&=
	R^{-n/2}\Bigl(\int_{B^n_R}\ \sup_{(t',\theta)\in[0,R^2)\times\Theta}
	\bigl|\bigl(e^{iR^{-2}t'\Delta}g\bigr)\bigl(\gamma(R^{-1}y,\,R^{-2}t',\,\theta)\bigr)\bigr|^2\,dy\Bigr)^{1/2}.
	\intertext{By definition of $h$ and $\gamma_R$, the term inside the absolute value is $e^{iR^{-2}t'\Delta}g(R^{-1}\gamma_R(y,t',\theta))= R^{n/2}e^{it'\Delta}h(\gamma_R(y,t',\theta))$, thus}
	&=\Bigl(\int_{B^n_R}\ \sup_{(t',\theta)\in[0,R^2)\times\Theta}\abs{e^{it'\Delta}h(\gamma_R(y,t',\theta))}^2\,dy\Bigr)^{1/2}.
	\notag
\end{align}
By \eqref{eq:rescaled-explicit}, bound \eqref{eq:single} is equivalent to
\begin{equation}\label{eq:global}
\Bigl\|\sup_{(t',\theta)\in[0,R^2)\times\Theta}\abs{e^{it'\Delta}h(\gamma_R(y,t',\theta))}\Bigr\|_{L^2(B^n_R,\,dy)}\ \le\ C_\eps\,R^{\sigma+\eps}\,\norm h_2 ,
\end{equation}
for all $h$ with $\supp\widehat h\subset A(1)$.

\subsection{Temporal localization}\label{ssec:temporal}
Estimate \eqref{eq:global} has time ranging over $[0,R^2)$. As in Lee \cite{Lee}, it reduces to the following family of estimates with time ranging over intervals of length $R$. For every $\eps>0$ there is $C_\eps>0$ such that for every $\tilde t\in[0,R^2)$ and every $h$ with $\supp\widehat h\subset A(1)+B^n_{1/100}$,
\begin{equation}\label{eq:short}
\Bigl\|\sup_{(t,\theta)\in[0,R)\times\Theta}\abs{e^{it\Delta}h\bigl(\gamma_R(y,\tilde t+t,\theta)\bigr)}\Bigr\|_{L^2(B^n_R,\,dy)}\ \le\ C_\eps\,R^{\sigma+\eps}\,\norm h_2.
\end{equation}

The reduction relies on the following lemma of Lee, which we state in the form in which we use it.

\begin{lemma}[Lee {\cite[Lemma~2.1]{Lee}}]\label{lem:lee}
There is a constant $c_1=c_1(n)\ge1$ with the following property. Let $R\gg1$, let $M\ge1$, and partition $[0,R^2)$ into $R$ many $I_j:=[jR,(j+1)R)$, $0\le j<R$. Let $h\in L^2(\R^n)$ with $\supp\widehat h\subset A(1)$ and let $\eps>0$. Then there are functions $h_j\in L^2(\R^n)$, $0\le j<R$, whose Fourier supports satisfy
\begin{equation}\label{eq:lee-supp}
\supp\widehat{h_j}\ \subset\ \supp\widehat h+B^n_{c_1R^{-1}} ,
\end{equation}
such that
\begin{equation}\label{eq:lee-props}
\Bigl(\sum_{j=0}^{R-1}\norm{h_j}_2^2\Bigr)^{1/2}\ \le\ C_\eps R^\eps\norm h_2 ,
\end{equation}
and such that, for every $N>0$,
\begin{equation}\label{eq:lee-point}
\abs{e^{it\Delta}h(z)}\ \le\ \abs{e^{it\Delta}h_j(z)}+C_NR^{-N}\norm h_2,
\end{equation}
for all $z\in Q^n_{MR}(0)$ and all $t\in I_j$.
\end{lemma}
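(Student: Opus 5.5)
The plan is to build each $h_j$ by cutting off the solution at the initial time of $I_j$ to a ball of radius slightly larger than $R$ around the origin, and then evolving back to time $0$. The pointwise bound \eqref{eq:lee-point} will follow from finite speed of propagation at frequency $\sim1$ over a time interval of length $R$. The square-sum bound \eqref{eq:lee-props} will follow from an almost-orthogonality (Schur test) argument: a wave packet of frequency $\sim1$ can remain in that ball only for a bounded number of the time slots $I_j$.

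\textbf{Construction.} Fix $\chi\in\mathcal S(\R^n)$ with $\widehat\chi\in C_c^\infty(B^n_1)$ and $\abs{\chi}\ge1$ on $B^n_1$. Put $t_j:=jR$ and $\chi_R(x):=\chi(x/R^{1+\eps})$, and define
\begin{equation*}
h_j:=e^{-it_j\Delta}\bigl(\chi_R\,e^{it_j\Delta}h\bigr).
\end{equation*}
The propagator acts on the Fourier side by multiplication by a unimodular function, so $\supp\widehat{h_j}=\supp\widehat{\chi_R e^{it_j\Delta}h}\subset\supp\widehat h+B^n_{R^{-1-\eps}}$. This gives \eqref{eq:lee-supp} with any $c_1\ge1$.

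\textbf{Pointwise bound.} For $t\in I_j$ write $e^{it\Delta}h-e^{it\Delta}h_j=e^{i(t-t_j)\Delta}\bigl[(1-\chi_R)e^{it_j\Delta}h\bigr]$, where $0\le t-t_j<R$. The function $(1-\chi_R)e^{it_j\Delta}h$ has frequencies in $A(1)+B^n_{1}$. On such functions, $e^{is\Delta}$ with $\abs s\le R$ has a kernel that, after inserting a frequency cutoff, is $O_N(R^{-N})$ outside $\{\abs{x-y}\lesssim R\}$. This follows from non-stationary phase, since the group velocity $4\pi\xi$ is bounded. Take $z\in Q^n_{MR}(0)$. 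The part of $(1-\chi_R)e^{it_j\Delta}h$ within distance $CR$ of $z$ lives where $\abs x\lesssim MR\ll R^{1+\eps}$. There $1-\chi_R$ is small, say $O(R^{-\eps})\cdot\abs x/R^{1+\eps}$ to every order after Taylor expansion. This is not yet good enough, so I would choose $\chi$ with $\chi\equiv1$ to infinite order at $0$ in the sense $1-\chi(x)=O(\abs x^N)$; this is achievable by taking $\widehat\chi$ with vanishing moments of $\widehat\chi-\delta$ approximately. Alternatively, I would simply center the cutoff radius at $R^{1+\eps}$ and use the Schwartz tails of the kernel. Either way one gets \eqref{eq:lee-point}, and the constant depends on $M$ (which is harmless, since $M$ is fixed).

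\textbf{Square-sum bound.} This is the main step. By a $TT^*$ computation,
\begin{equation*}
\sum_j\norm{h_j}_2^2=\sum_j\norm{\chi_R e^{it_j\Delta}h}_2^2=\Bigl\langle \sum_{j}e^{-it_j\Delta}\abs{\chi_R}^2e^{it_j\Delta}h,\,h\Bigr\rangle .
\end{equation*}
It therefore suffices to bound the operator norm of $\sum_j e^{-it_j\Delta}\abs{\chi_R}^2e^{it_j\Delta}$ on $L^2$ functions with frequency in $A(1)$ by $R^{O(\eps)}$. I would prove this by applying the Schur test to the operators $T_j:=\chi_R e^{it_j\Delta}P$, where $P$ is a smooth projection to frequencies in $A(1)+B^n_{1/100}$. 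Each $T_j$ has norm $\le1$. For the off-diagonal terms,
\begin{equation*}
T_jT_k^*=\chi_R\,e^{i(j-k)R\Delta}P^2\,\overline{\chi_R},
\end{equation*}
and $\norm{T_jT_k^*}\le C_NR^{-N}$ as soon as $\abs{j-k}R\ge R^{1+2\eps}$. Indeed, a frequency-$\sim1$ wave packet starting in $B_{R^{1+\eps}}$ travels a distance $\sim\abs{j-k}R\gg R^{1+\eps}$, which again follows from non-stationary phase, now using $\abs\xi\ge\frac12$. Hence each row has at most $\lesssim R^{2\eps}$ non-negligible entries, and Cotlar--Stein/Schur gives $\sum_j\norm{T_jh}^2\lesssim R^{2\eps}\norm h_2^2$. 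Renaming $\eps$ gives \eqref{eq:lee-props}.

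The main obstacle is making this dispersion/non-stationary phase estimate rigorous and uniform, with $\abs{j-k}$ ranging up to $R$. The lower bound $\abs\xi\gtrsim1$ on the frequency support is exactly what prevents packets from lingering in the ball.
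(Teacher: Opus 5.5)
The paper gives no proof of this lemma (it is imported from Lee), so I judge your argument on its own. Your architecture is the standard one, and the step you call the main obstacle is fine as written. For $\abs s=\abs{j-k}R\ge R^{1+2\eps}$ and $\abs x,\abs y\le\abs s$, the phase of the kernel of $e^{is\Delta}P^2$ has gradient $\gtrsim\abs s$ (this is where $\abs\xi\ge\frac12$ enters) and Hessian $\sim\abs s$, so each integration by parts gains $\abs s^{-1}$. The Schwartz tails of $\chi_R$ beyond $\abs x\sim\abs s$ cost $O(R^{-N})$, and Schur then gives \eqref{eq:lee-props}.

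\textbf{The gap is the pointwise bound}, which you flag but do not close. You need one fixed cutoff with $\sup_{\abs x\le CMR}\abs{1-\chi_R(x)}\le C_NR^{-N}$ for every $N$. The normalization $\abs\chi\ge1$ on $B^n_1$ is beside the point, and even with $\chi(0)=1$, Taylor expansion only gives $O(MR^{-\eps})$. Infinite-order flatness of $1-\chi$ at $0$ is impossible: $\widehat\chi\in C^\infty_c$ makes $\chi$ entire, so it would force $\chi\equiv1\notin\mathcal S$. Flatness of finite order $K$ gives an error of size about $(MR^{-\eps})^K$. So you need $K\ge N/\eps$, and your $h_j$ then depend on $N$, whereas the lemma fixes the $h_j$ first and asserts \eqref{eq:lee-point} for every $N$. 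You cannot compensate by dilating to a radius $\rho\gg R^{1+O(\eps)}$: a frequency-one packet starting near the origin stays in $B^n_\rho$ during about $\rho/R$ of the intervals $I_j$, so the square sum can be as large as $(\rho/R)\norm h_2^2$. Your alternative sentence does not resolve the tension either: a $C^\infty_c$ cutoff equal to $1$ on $B^n_{R^{1+\eps}}$ has the flatness but violates \eqref{eq:lee-supp}. No dilate of a fixed profile does the job.

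The missing idea is to spend the full Fourier room $R^{-1}$ permitted by \eqref{eq:lee-supp} on a mollification, at scale $R$, of the indicator of the much larger ball. Take $\phi_R:=R^{-n}\phi(\cdot/R)$ with $\widehat\phi\in C^\infty_c(B^n_1)$ and $\widehat\phi(0)=1$. Set $\chi_R:=\phi_R*\mathbf 1_{B^n_{R^{1+\eps}}}$, and keep your definition $h_j:=e^{-it_j\Delta}(\chi_R\,e^{it_j\Delta}h)$. Then $\widehat{\chi_R}=\widehat\phi(R\,\cdot)\,\widehat{\mathbf 1_{B^n_{R^{1+\eps}}}}$ is supported in $\overline{B^n_{R^{-1}}}$, which gives \eqref{eq:lee-supp} with $c_1=2$. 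Moreover $\norm{\chi_R}_\infty\le\norm\phi_1$ and $\abs{\chi_R(x)}\le C_N(\abs x/R)^{-N}$ for $\abs x\ge2R^{1+\eps}$, which is all your Schur argument uses. Since $\int\phi_R=1$, for $\abs x\le CMR$ and $R$ large,
\[
\abs{1-\chi_R(x)}=\Bigl|\int_{\abs{x-y}\ge R^{1+\eps}}\phi_R(y)\,dy\Bigr|\le\int_{\abs u\ge R^{\eps}-CM}\abs{\phi(u)}\,du\le C_{N}R^{-N}
\]
for all $N$ simultaneously. With this cutoff, use your near/far splitting of the kernel $K_s$ of $e^{is\Delta}$, where $s=t-t_j\in[0,R)$, restricted to frequencies in $B^n_3$. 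Near $z$ (on $\abs{x-z}\lesssim R$), apply Cauchy--Schwarz with $\norm{K_s}_2\lesssim1$; beyond that, use the rapid decay of $K_s$. This yields \eqref{eq:lee-point}, and the rest of your proof goes through unchanged.
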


We now prove the reduction. 
\begin{lemma}\label{lem:temporal}
If \eqref{eq:short} holds for every $\tilde t\in[0,R^2)$, then \eqref{eq:global} holds.
\end{lemma}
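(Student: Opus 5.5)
We split the time interval $[0,R^2)$ into the $R$ intervals $I_j=[jR,(j+1)R)$ of Lemma~\ref{lem:lee}. On each piece we replace $h$ by the function $h_j$ that Lemma~\ref{lem:lee} provides. Then we apply \eqref{eq:short} with $\tilde t=jR$ to each $h_j$, and sum in $\ell^2$.

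First we fix the spatial region. For $y\in B^n_R$, $t'\in[0,R^2)$ and $\theta\in\Theta$, \eqref{C1R} and \eqref{C2R} together with $\gamma_R(y,0,\theta)=y$ give
\[
\abs{\gamma_R(y,t',\theta)}\le\abs{\gamma_R(y,t',\theta)-\gamma_R(y,0,\theta)}+\abs{y}\le C_2R^{-1}R^2+R.
\]
So every evaluation point lies in $Q^n_{MR}(0)$ with $M:=2(C_2+1)$. We then apply Lemma~\ref{lem:lee} with this $M$. One small point: $\supp\widehat{h_j}\subset A(1)+B^n_{c_1R^{-1}}\subset A(1)+B^n_{1/100}$ once $R\ge100c_1$, so each $h_j$ is an admissible input for \eqref{eq:short}.

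Next, the pointwise bound. Write $t'=jR+t$ with $t\in[0,R)$. By \eqref{eq:lee-point},
\[
\sup_{t'\in[0,R^2),\theta}\abs{e^{it'\Delta}h(\gamma_R(y,t',\theta))}\le\max_{j}\sup_{t\in[0,R),\theta}\abs{e^{i(jR+t)\Delta}h_j(\gamma_R(y,jR+t,\theta))}+C_NR^{-N}\norm h_2 .
\]
We bound the maximum over $j$ by the $\ell^2$ sum, take $L^2(B^n_R)$ norms, and use $\abs{B^n_R}^{1/2}R^{-N}\lesssim1$. This gives
\[
\norm{\cdot}_{L^2(B^n_R)}\le\Bigl(\sum_j\Bigl\|\sup_{t,\theta}\abs{e^{it\Delta}\bigl(e^{ijR\Delta}h_j\bigr)(\gamma_R(y,jR+t,\theta))}\Bigr\|_{L^2(B^n_R)}^2\Bigr)^{1/2}+C\norm h_2 .
\]

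Now we apply \eqref{eq:short} with $\tilde t=jR$ to the function $e^{ijR\Delta}h_j$. This function has the same Fourier support as $h_j$ and the same $L^2$ norm. The subtlety is that \eqref{eq:short} as written evaluates $e^{it\Delta}$ rather than $e^{i(\tilde t+t)\Delta}$. This is why we feed it $e^{i\tilde t\Delta}h_j$: the time shift in the propagator is absorbed into the function, while the curve keeps the shifted time $\tilde t+t$. Each summand is then at most $C_\eps R^{\sigma+\eps}\norm{h_j}_2$. Combining with \eqref{eq:lee-props}, the total is at most $C_\eps R^{\sigma+2\eps}\norm h_2$. Since $\eps$ is arbitrary, this gives \eqref{eq:global}, noting $\sigma\ge0$ so the error term is harmless.

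The main obstacle is bookkeeping rather than anything deep. We must check that the curve points stay in the cube $Q^n_{MR}(0)$ where \eqref{eq:lee-point} is valid; this is where \eqref{C2R} is used. We must also check that the enlarged Fourier support is still admissible for \eqref{eq:short}. The Lee lemma does the real work.
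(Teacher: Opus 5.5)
Your proposal is correct and follows the paper's proof essentially step by step. It makes the same choice $M=2C_2+2$ (justified by \eqref{C2R}), absorbs the time shift in the same way by applying \eqref{eq:short} to $e^{ijR\Delta}h_j$, and likewise majorizes the maximum over $j$ by an $\ell^2$ sum before invoking \eqref{eq:lee-props}; your handling of the error term, keeping the factor $\abs{B^n_R}^{1/2}R^{-N}$, is if anything slightly more careful than the paper's.
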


\begin{proof}
Set $t_j:=jR$ and partition $[0,R^2)$ into $R$ many intervals $I_j:=[t_j,t_{j+1})$, $0\le j<R$. Let $M:=2C_2+2$. Apply Lemma~\ref{lem:lee} with this $M$ and with the partition $\{I_j\}$. We obtain functions $h_j$, $0\leq j < R$ associated to $h$.
Since 
$R\gg100c_1$, by
\eqref{eq:lee-supp},
\begin{equation}\label{eq:lee-supp-here}
\supp\widehat{h_j}\ \subset\ \supp\widehat h+B^n_{c_1R^{-1}}\ \subset\ A(1)+B^n_{1/100}.
\end{equation}
Observe that, for $y\in B^n_R$, writing $\gamma_R(y,t',\theta)-y=\gamma_R(y,t',\theta)-\gamma_R(y,0,\theta)$ and using \eqref{C2R},
\begin{equation}\label{eq:inQ}
\abs{\gamma_R(y,t',\theta)}\le\abs y+C_2R^{-1}t'\le(1+C_2)R,
\end{equation}
for all $t'\in[0,R^2),\ \theta\in\Theta$.
Thus, $\gamma_R(y,t',\theta)\in Q^n_{MR}(0)$ and \eqref{eq:lee-point} holds for $z=\gamma_R(y,t',\theta)$. Writing $t'=t_j+t$ with $t\in[0,R)$, define
\begin{equation}\label{eq:boost}
G_j:=e^{it_j\Delta}h_j ,
\end{equation}
Observe that
$\norm{G_j}_2=\norm{h_j}_2$ and $\supp\widehat{G_j}=\supp\widehat{h_j}\subset A(1)+B^n_{1/100}$. 
Let us prove \eqref{eq:global}. Write
\begin{align}
&\phantom{\le}\Bigl\|\sup_{(t',\theta)\in[0,R^2)\times\Theta}\abs{e^{it'\Delta}h(\gamma_R(y,t',\theta))}\Bigr\|_{L^2(B^n_R,\,dy)}\ \Bigl\|
\\&\ =\sup_{(t,\theta)\in[0,R)\times\Theta}\ \sup_{0\leq j < R}\ \abs{e^{i(t_j+t)\Delta}h(\gamma_R(y,t_j+t,\theta))}\Bigr\|_{L^2(B^n_R)}^2,
\label{eq:tempchain}\\
\intertext{applying \eqref{eq:lee-point}, \eqref{eq:inQ}, and \eqref{eq:boost},}
&\ \le\ \Bigl\|\sup_{(t,\theta)\in[0,R)\times\Theta}\ \sup_{0\leq j < R}\ \abs{e^{it\Delta}G_j(\gamma_R(y,t_j+t,\theta))}\Bigr\|_{L^2(B^n_R)}^2+C_{N}R^{-N}\norm h_2^2,\notag\\
\intertext{majorizing the supremum in $j$ by a sum in $j$,}
&\ \le\ \sum_{0\le j<R}\Bigl\|\sup_{(t,\theta)\in[0,R)\times\Theta}\abs{e^{it\Delta}G_j(\gamma_R(y,t_j+t,\theta))}\Bigr\|_{L^2(B^n_R)}^2
+C_{N}R^{-N}\norm h_2^2,\notag\\
\intertext{applying \eqref{eq:short} with $\tilde t=t_j$ to each $G_j$,}
&\ \le\ C_\eps^2R^{2\sigma+2\eps}\sum_{0\le j<R}\norm{h_j}_2^2+C_NR^{-N}\norm h_2^2,\notag\\
\intertext{and then \eqref{eq:lee-props},}
&\ \le\ C'_\eps R^{2\sigma+4\eps}\norm h_2^2,\notag
\end{align}
which is the square of the right-hand side of \eqref{eq:global} after renaming $\eps$.
\end{proof}

\subsection{Passing to the evolved sets}\label{ssec:evolved}
Fix $\tilde t\in[0,R^2)$ and define, for each $y\in\R^n$, the \emph{evolved set}
\begin{equation}\label{eq:evolved}
\mathcal E_y(\tilde t):=\{\gamma_R(y,\tilde t,\theta):\theta\in\Theta\}\subset\R^n .
\end{equation}
Think of this set as the evolution of $\Theta$ by the curve, with the base point $y$ fixed, at time $\tilde t$. By \eqref{C2R}, for $t\in[0,R)$,
\begin{equation}\label{eq:drift}
\abs{\gamma_R(y,\tilde t+t,\theta)-\gamma_R(y,\tilde t,\theta)}\ \le\ C_2R^{-1}t\ \le\ C_2 .
\end{equation}
Hence
\begin{equation}\label{eq:tosets}
\sup_{(t,\theta)\in[0,R)\times\Theta}\abs{e^{it\Delta}h(\gamma_R(y,\tilde t+t,\theta))}
\ \le\ \sup_{(z,t)\in(\mathcal E_y(\tilde t)+B^n_{C_2})\times[0,R)}\abs{e^{it\Delta}h(z)} ,
\end{equation}
and \eqref{eq:short} reduces to proving that, for all $h$ with $\supp\widehat h\subset A(1)+B^n_{1/100}$,
\begin{equation}\label{eq:slice}
\Bigl\|\sup_{(z,t)\in(\mathcal E_y(\tilde t)+B^n_{C_2})\times[0,R)}\abs{e^{it\Delta}h(z)}\Bigr\|_{L^2(B^n_R,\,dy)}\ \le\ C_\eps\,R^{\sigma+\eps}\,\norm h_2 ,
\end{equation}
with $C_\eps$ independent of $\tilde t$, $h$ and $R$. This is the estimate we prove in Section~\ref{sec:proof}.

\section{Geometry of the evolved sets}\label{sec:geom}

Throughout this section fix $\tilde t\in[0,R^2)$ and define, for $\theta\in\Theta$,
\begin{equation}\label{eq:Psi}
\Psi_\theta(\,\cdot\,):=\gamma_R(\,\cdot\,,\tilde t,\theta):\R^n\to\R^n.
\end{equation}
Notice that $\mathcal E_y(\tilde t)=\{\Psi_\theta(y):\theta\in\Theta\}$ 
and, for each $\theta$, $\Psi_\theta$ is $C_1$-bi-Lipschitz by \eqref{C1R}.

\begin{lemma}\label{lem:homeo}
Each $\Psi_\theta$ is a homeomorphism of $\R^n$ onto $\R^n$. In particular $\Psi_\theta^{-1}$ is defined on all of $\R^n$ and is $C_1$-Lipschitz.
\end{lemma}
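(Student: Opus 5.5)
We show that $\Psi_\theta$ is injective, continuous, and has closed and open image; connectedness of $\R^n$ then forces the image to be all of $\R^n$. Fix $\theta\in\Theta$ and write $\Psi=\Psi_\theta$. By \eqref{C1R} with $t'=\tilde t$,
\begin{equation*}
C_1^{-1}\abs{y-y'}\le\abs{\Psi(y)-\Psi(y')}\le C_1\abs{y-y'}\qquad\text{for all }y,y'\in\R^n.
\end{equation*}
The upper bound gives continuity (indeed $C_1$-Lipschitz), and the lower bound gives injectivity.

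\textbf{The image is closed.} Suppose $\Psi(y_k)\to w$. The lower bound in \eqref{C1R} shows that $(y_k)$ is Cauchy, so $y_k\to y$. By continuity, $\Psi(y)=w$.

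\textbf{The image is open.} This is the main point. Since $\Psi:\R^n\to\R^n$ is continuous and injective, Brouwer's invariance of domain says $\Psi$ is an open map, so $\Psi(\R^n)$ is open. I would simply quote invariance of domain here. I do not see a way to avoid a topological input of this kind: the metric conditions \eqref{C1R}--\eqref{C3R} alone do not obviously produce surjectivity, since there is no differentiability and hence no inverse function theorem to use.

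\textbf{Conclusion.} $\Psi(\R^n)$ is nonempty, open and closed in the connected space $\R^n$, so $\Psi(\R^n)=\R^n$. Thus $\Psi$ is a continuous bijection of $\R^n$ onto itself, and $\Psi^{-1}$ is defined on all of $\R^n$. Given $w,w'\in\R^n$, set $y=\Psi^{-1}(w)$ and $y'=\Psi^{-1}(w')$ in the lower bound of \eqref{C1R}. This gives
\begin{equation*}
\abs{\Psi^{-1}(w)-\Psi^{-1}(w')}\le C_1\abs{w-w'},
\end{equation*}
so $\Psi^{-1}$ is $C_1$-Lipschitz. In particular $\Psi^{-1}$ is continuous, and $\Psi$ is a homeomorphism.
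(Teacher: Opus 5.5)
Your proof is correct and follows the same route as the paper: injectivity and the Lipschitz bound for $\Psi_\theta^{-1}$ come from the lower bound in \eqref{C1R}, openness of the image from invariance of domain, and surjectivity from connectedness of $\R^n$. The only difference is the closedness step, where your Cauchy-sequence argument (which uses completeness of $\R^n$) replaces the paper's argument that $\Psi_\theta$ is proper and hence a closed map, and is more direct.
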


\begin{proof}
Write $\Psi=\Psi_\theta$ and recall $C_1^{-1}\abs{x-x'}\le\abs{\Psi(x)-\Psi(x')}\le C_1\abs{x-x'}$.

We first prove that $\Psi$ is a homeomorphism onto its image. The lower bound in \eqref{C1R} gives injectivity. The lower bound also implies the $C_1$-Lipschitz continuity of $\Psi^{-1}: \Psi(\R^n)\to \R^n$. Indeed, if $\Psi(x_k){\to}\Psi(x)$ as $k\to\infty$, then $\abs{\Psi^{-1}(\Psi(x_k))-\Psi^{-1}(\Psi(x))}=\abs{x_k-x}\le C_1\abs{\Psi(x_k)-\Psi(x)}\to0$ as $k\to\infty$. With the continuity of $\Psi$ this proves the claim.

We now check that the image is open. By the Invariance of Domain Theorem (see Corollary 19.9 in \cite{Bredon}), a continuous injective map $\R^n\to\R^n$ is open, so $\Psi(\R^n)$ is open.

We now prove that the image is closed. If $K\subset\R^n$ is compact, then $\Psi^{-1}(K)$ is the image of the bounded set $K\cap\Psi(\R^n)$ under the $C_1$-Lipschitz map $\Psi^{-1}$, therefore $\Psi^{-1}(K)$ is bounded. It is also closed, by the continuity of $\Psi$, so it is compact. Since $K$ was arbitrary, we deduce that $\Psi:\R^n\to\R^n$ is a proper map. Since a proper continuous map between locally compact Hausdorff spaces is closed (see Proposition 11.5 in \cite{Bredon}), we deduce that $\Psi(\R^n)$ is closed in $\R^n$.

We conclude. The set $\Psi(\R^n)$ is nonempty, open and closed in the connected space $\R^n$, therefore it is equal to $\R^n$.
\end{proof}

For a value $v\in\R^n$ define the map that recovers the base point from the value as
\begin{equation}\label{eq:Phi}
\Phi_v:\Theta\to\R^n,\qquad \Phi_v(\theta):=\Psi_\theta^{-1}(v),
\end{equation}
that is, \ $\Phi_v(\theta)$ is the unique $x$ with $\gamma_R(x,\tilde t,\theta)=v$ (unique by Lemma~\ref{lem:homeo}).

\begin{lemma}\label{lem:PhiLip}
Each $\Phi_v$ is Lipschitz with constant $C_1C_3R$. 
\end{lemma}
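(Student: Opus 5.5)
The plan is a direct two-step estimate, using the bi-Lipschitz lower bound in \eqref{C1R} to move from base points to values, and then \eqref{C3R} to compare values at the same base point. Fix $v\in\R^n$ and $\theta,\theta'\in\Theta$. Set $x:=\Phi_v(\theta)$ and $x':=\Phi_v(\theta')$, so that by definition \eqref{eq:Phi} and Lemma~\ref{lem:homeo},
$$\Psi_\theta(x)=v=\Psi_{\theta'}(x'),$$
that is, $\gamma_R(x,\tilde t,\theta)=\gamma_R(x',\tilde t,\theta')$.

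First, we compare $x$ and $x'$ at the common parameter $\theta'$. The lower bound in \eqref{C1R}, applied with the fixed time $\tilde t$ and parameter $\theta'$, gives
$$\abs{x-x'}\le C_1\abs{\Psi_{\theta'}(x)-\Psi_{\theta'}(x')}.$$
Equivalently, this step is the $C_1$-Lipschitz property of $\Psi_{\theta'}^{-1}$ from Lemma~\ref{lem:homeo}.

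Next, we substitute $\Psi_{\theta'}(x')=v=\Psi_\theta(x)$. The right-hand side becomes $C_1\abs{\Psi_{\theta'}(x)-\Psi_\theta(x)}$, in which both points have the same base point $x$ and the same time $\tilde t$ and differ only in the parameter. Then \eqref{C3R} bounds this by $C_1C_3R\abs{\theta-\theta'}$. Combining the two steps gives
$$\abs{\Phi_v(\theta)-\Phi_v(\theta')}\le C_1C_3R\,\abs{\theta-\theta'},$$
which is the claimed Lipschitz bound.

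There is no real obstacle. The only point to check is that $\Phi_v$ is well defined for every $v$, which requires surjectivity of each $\Psi_\theta$; this is supplied by Lemma~\ref{lem:homeo}. The constants are uniform in $v$ and $\tilde t$ because \eqref{C1R} and \eqref{C3R} hold uniformly in these variables.
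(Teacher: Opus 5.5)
Your proposal is correct and follows essentially the same route as the paper: you apply the lower bound in \eqref{C1R} at the common parameter $\theta'$, substitute $\Psi_{\theta'}(\Phi_v(\theta'))=v=\Psi_\theta(\Phi_v(\theta))$, and then use \eqref{C3R} at the fixed base point $\Phi_v(\theta)$. Your added remark that $\Phi_v$ is well defined because Lemma~\ref{lem:homeo} gives surjectivity is a sound check that the paper makes when it defines $\Phi_v$.
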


\begin{proof}
Observe that $\Psi_{\theta'}(\Phi_v(\theta'))=v=\Psi_\theta(\Phi_v(\theta))$. By this identity, the lower bound in \eqref{C1R} and \eqref{C3R},
\begin{align}
\abs{\Phi_v(\theta)-\Phi_v(\theta')}
&\ \le\ C_1\,\abs{\Psi_{\theta'}(\Phi_v(\theta))-\Psi_{\theta'}(\Phi_v(\theta'))}\label{eq:PhiLipchain}\\
&\ =\ C_1\,\abs{\Psi_{\theta'}(\Phi_v(\theta))-\Psi_{\theta}(\Phi_v(\theta))}\notag\\
&\ \le\ C_1C_3R\,\abs{\theta-\theta'}.\notag\qedhere
\end{align}
\end{proof}

\begin{lemma}[Covering numbers of $\Phi_v(\Theta)$]\label{lem:cover}
For every $\eps>0$ there exists $C_\eps>0$ such that for all $1\le r\le C_1C_3R$,
\begin{equation}\label{eq:cover}
N\bigl(\Phi_v(\Theta),r\bigr)\ \lesssim\ C_\eps\Bigl(\frac R r\Bigr)^{\beta+\eps}.
\end{equation}
\end{lemma}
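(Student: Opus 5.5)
The plan is to push a cover of $\Theta$ forward through $\Phi_v$, using Lemma~\ref{lem:PhiLip} together with the definition \eqref{eq:dimM} of the upper Minkowski dimension.

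\emph{Step 1: covering numbers of $\Theta$.} Fix $\eps>0$. Since $\beta=\dimM(\Theta)$ is a $\limsup$, there is $\delta_0=\delta_0(\eps,\Theta)>0$ such that $N(\Theta,\delta)\le\delta^{-\beta-\eps}$ for all $0<\delta\le\delta_0$. For $\delta_0<\delta\le 1$ we use monotonicity of $N(\Theta,\cdot)$: $N(\Theta,\delta)\le N(\Theta,\delta_0)\le\delta_0^{-\beta-\eps}$. Hence for all $0<\delta\le1$,
\begin{equation*}
N(\Theta,\delta)\ \le\ C_\eps\,\delta^{-\beta-\eps},
\end{equation*}
with $C_\eps$ depending only on $\eps$ and $\Theta$. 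Note that $\delta\le1$ ensures $\delta^{-\beta-\eps}\ge1$, which is what makes this uniform bound valid.

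\emph{Step 2: choice of scale.} Given $1\le r\le C_1C_3R$, set $\delta:=r/(C_1C_3R)\in(0,1]$. Cover $\Theta$ by $N(\Theta,\delta)$ balls $B^n_\delta(\theta_k)$. We may assume each ball meets $\Theta$: discard the others. We may also recentre each ball at a point $\theta_k\in\Theta$ and double the radius, at the cost of replacing $\delta$ by $2\delta$, which only changes constants.

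\emph{Step 3: image cover.} By Lemma~\ref{lem:PhiLip}, for $\theta\in\Theta\cap B^n_{2\delta}(\theta_k)$,
\begin{equation*}
\abs{\Phi_v(\theta)-\Phi_v(\theta_k)}\le C_1C_3R\cdot 2\delta = 2r.
\end{equation*}
So the balls $B^n_{2r}(\Phi_v(\theta_k))$ cover $\Phi_v(\Theta)$. Each ball of radius $2r$ can be covered by $\lesssim_n 1$ balls of radius $r$. Therefore
\begin{equation*}
N(\Phi_v(\Theta),r)\lesssim N(\Theta,\delta)\le C_\eps\Bigl(\frac{C_1C_3R}{r}\Bigr)^{\beta+\eps}\lesssim C_\eps\Bigl(\frac Rr\Bigr)^{\beta+\eps},
\end{equation*}
with the constants $C_1,C_3$ absorbed into the implicit constant (allowing $\eps\le1$).

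There is no real obstacle. The only care needed is in Step 1: the passage from a $\limsup$ to a bound valid uniformly for all $\delta\le1$, and the observation that the restriction $r\ge1$ is not actually used, while $r\le C_1C_3R$ is exactly what guarantees $\delta\le 1$. Note also that the bound is uniform in $v$ and in $\tilde t$, since only the Lipschitz constant of $\Phi_v$ enters.
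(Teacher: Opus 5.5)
Your proof is correct and follows the paper's argument: you set $\delta=r/(C_1C_3R)$, push a $\delta$-cover of $\Theta$ forward through the $C_1C_3R$-Lipschitz map $\Phi_v$, and turn the $\limsup$ in \eqref{eq:dimM} into the uniform bound $N(\Theta,\delta)\le C_\eps\delta^{-\beta-\eps}$ on $(0,1]$ via monotonicity. Your recentring of the covering balls at points of $\Theta$, at the cost of a dimensional constant, is a small detail that the paper leaves implicit.
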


\begin{proof}
Let $C_0=C_1C_3$. Since $\Phi_v$ is $(C_0R)$-Lipschitz, it takes a cover of $\Theta$ by $N(\Theta,\delta)$ balls of radius $\delta$ and sends it to a cover of $\Phi_v(\Theta)$ by sets that can each be covered by a ball of radius $C_0R\delta$. Thus, taking $\delta=\frac{r}{C_0R}\in(0,1]$,
\begin{equation}\label{eq:covpush}
N(\Phi_v(\Theta),r)\ \le\ N\Big(\Theta,\frac{r}{C_0R}\Big)
=N(\Theta,\delta)
\end{equation}
It remains to bound $N(\Theta,\delta)$ for all $\delta\in(0,1]$. By definition of $\beta$, for every $\eps>0$ there is $\delta_0=\delta_0(\eps)$ with $N(\Theta,\delta)\le\delta^{-\beta-\eps}$ for $\delta\le\delta_0$. Observe that $N(\Theta,\cdot)$ is non-increasing. Thus, when $\delta\ge\delta_0$, we have $N(\Theta,\delta)\le N(\Theta,\delta_0)$. Call $C_\eps:=\max\{1,N(\Theta,\delta_0)\}$. Then, for all $\delta\in(0,1]$,
\begin{equation}\label{eq:covtheta}
N(\Theta,\delta)\le C_\eps\,\delta^{-\beta-\eps},
\end{equation}
and \eqref{eq:cover} follows.
\end{proof}


We now turn to the two counting statements that will help us estimate the fractal density \eqref{eq:gammadef}. Tile $\R^n$ by unit cubes centered at the lattice base points $j\in\Z^n$. The first statement bounds how many of those base points can reach a single lattice unit cube in $\R^{n+1}$.

\begin{lemma}[Bounded multiplicity]\label{lem:mult}
	Fix a lattice unit cube $Q\subset\R^{n+1}$ with center $w\in\Z^{n+1}$. Let $v = \pi(w)$ and fix $c\ge1$.
	Then
	\begin{equation}\label{eq:mult}
		\#\bigl\{j\in\Z^n:\ (\mathcal E_j(\tilde t)+B^n_{c})\cap \pi(Q)\ne\emptyset\bigr\}
		\ \lesssim_{c}\ N\bigl(\Phi_v(\Theta),1\bigr)
		\ \lesssim_\eps\ R^{\beta+\eps}.
	\end{equation}
\end{lemma}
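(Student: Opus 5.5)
The plan is to translate the condition on $j$ into a statement about base points: if $\mathcal E_j(\tilde t)+B^n_c$ meets $\pi(Q)$, then $j$ lies within a bounded distance of $\Phi_v(\Theta)$. A bounded-overlap count of lattice points near a set covered by $N(\Phi_v(\Theta),1)$ unit balls then gives the first inequality. The second inequality is Lemma~\ref{lem:cover} with $r=1$.

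\textbf{Step 1: from the condition to a point of $\Phi_v(\Theta)$.} Suppose $(\mathcal E_j(\tilde t)+B^n_c)\cap\pi(Q)\neq\emptyset$. Then there are $\theta\in\Theta$ and $u\in\pi(Q)$ with $\abs{\Psi_\theta(j)-u}<c$. Since $\pi(Q)$ is a unit cube centered at $v$, we have $\abs{u-v}\le\sqrt n/2$, so $\abs{\Psi_\theta(j)-v}\le c+\sqrt n$. By Lemma~\ref{lem:homeo}, $\Psi_\theta^{-1}$ is defined everywhere and is $C_1$-Lipschitz. By definition, $\Phi_v(\theta)=\Psi_\theta^{-1}(v)$. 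Hence
\[
\abs{j-\Phi_v(\theta)}=\abs{\Psi_\theta^{-1}(\Psi_\theta(j))-\Psi_\theta^{-1}(v)}\le C_1(c+\sqrt n)=:\rho .
\]
So every admissible $j$ lies in the $\rho$-neighbourhood of $\Phi_v(\Theta)$.

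\textbf{Step 2: counting lattice points.} Cover $\Phi_v(\Theta)$ by $N(\Phi_v(\Theta),1)$ balls $B^n_1(x_i)$. Every admissible $j$ then lies in some ball $B^n_{1+\rho}(x_i)$. Each such ball contains $\lesssim_c(1+\rho)^n$ points of $\Z^n$, a constant depending only on $n$, $C_1$ and $c$. Summing over the balls gives
\[
\#\{j\}\lesssim_c N(\Phi_v(\Theta),1).
\]

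\textbf{Step 3: the covering bound.} Apply Lemma~\ref{lem:cover} with $r=1$. This is admissible because $1\le C_1C_3R$ for $R\gg1$. It gives $N(\Phi_v(\Theta),1)\lesssim_\eps R^{\beta+\eps}$, which is the second inequality.

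No step is a real obstacle. The only point needing care is Step 1: inverting $\Psi_\theta$ at the point $v$, which need not lie on the image of the lattice, requires the surjectivity given by Lemma~\ref{lem:homeo}, and it is also what makes $\Phi_v$ well defined.
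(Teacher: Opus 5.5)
Your proposal is correct and follows essentially the same route as the paper: the $C_1$-Lipschitz inverse from Lemma~\ref{lem:homeo} places every admissible $j$ within $C_1(c+\sqrt n)$ of $\Phi_v(\Theta)$, you count lattice points in the enlarged unit-ball cover, and you invoke Lemma~\ref{lem:cover} at $r=1$. Your bound $\abs{u-v}\le\sqrt n/2$ is slightly sharper than the paper's $\sqrt n$, which is harmless.
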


\begin{proof}
	Let $j$ be a base point from the set above and let $p\in(\mathcal E_j(\tilde t)+B^n_{c})\cap \pi(Q)$.
	Notice $\pi(Q)= \pi\big(Q^{n+1}_1(w)\big)=Q^n_1(v)$.
	Then $p\in Q_1^n(v)$, so $\abs{p-v}\le\sqrt n$, and $p\in\mathcal E_j(\tilde t)+B^n_c$, so there is $\theta\in\Theta$ with $\abs{\Psi_\theta(j)-p}<c$. Hence $\Psi_\theta(j)\in v+B^n_{c'}$ with $c':=c+\sqrt n$. Applying the Lipschitz bound for $\Psi_\theta^{-1}$ (Lemma~\ref{lem:homeo}),
	\begin{equation}\label{eq:multkey}
		\abs{j-\Phi_v(\theta)}=\bigl|\Psi_\theta^{-1}(\Psi_\theta(j))-\Psi_\theta^{-1}(v)\bigr|\ \le\ C_1\abs{\Psi_\theta(j)-v}\ \le\ C_1c' .
	\end{equation}
	So every such $j$ lies in $\Phi_v(\Theta)+B^n_{C_1c'}$. This $C_1c'$-neighborhood is covered by at most a quantity of unit balls comparable to $N(\Phi_v(\Theta),1)$, each ball containing at most $3^n$ lattice points. This proves the first inequality in \eqref{eq:mult}. Lemma~\ref{lem:cover} at $r=1$ gives the second inequality in \eqref{eq:mult}.
\end{proof}

Let us write a similar argument, at an arbitrary scale $r$, that bounds the number of base points reaching a ball of radius $r$.

\begin{lemma}[Packing]\label{lem:pack}
Let $c\ge1$ and $C\ge1$, fix $v\in\R^n$, and let $1\le r\le CR$. Then
\begin{equation}\label{eq:pack}
\#\bigl\{j\in\Z^n:\ (\mathcal E_j(\tilde t)+B^n_{c})\cap B^n_r(v)\ne\emptyset\bigr\}\ \lesssim_{\eps,c,C}\ R^{\beta+\eps}\,r^{\,n-\beta} .
\end{equation}
\end{lemma}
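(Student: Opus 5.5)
We follow the proof of Lemma~\ref{lem:mult}, now at scale $r$. Let $j$ be a base point in the set of \eqref{eq:pack}, and pick $p\in(\mathcal E_j(\tilde t)+B^n_c)\cap B^n_r(v)$ and $\theta\in\Theta$ with $\abs{\Psi_\theta(j)-p}<c$. Then $\abs{\Psi_\theta(j)-v}<r+c\le 2cr$, using $r\ge1$. Since $\Psi_\theta^{-1}$ is $C_1$-Lipschitz (Lemma~\ref{lem:homeo}),
\begin{equation*}
\abs{j-\Phi_v(\theta)}=\bigl|\Psi_\theta^{-1}(\Psi_\theta(j))-\Psi_\theta^{-1}(v)\bigr|\le 2cC_1\,r .
\end{equation*}
Hence every such $j$ lies in the neighborhood $\Phi_v(\Theta)+B^n_{2cC_1r}$.

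Next we cover $\Phi_v(\Theta)$ by $N(\Phi_v(\Theta),r)$ balls of radius $r$. The neighborhood above is then covered by the same number of balls of radius $(1+2cC_1)r$. Each such ball contains $\lesssim_c r^n$ lattice points, because $r\ge1$. Therefore the count in \eqref{eq:pack} is $\lesssim_c r^n\,N(\Phi_v(\Theta),r)$.

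It remains to bound $N(\Phi_v(\Theta),r)$ via Lemma~\ref{lem:cover}. That lemma requires $r\le C_1C_3R$, so we split into two cases.
\begin{itemize}
\item If $r\le C_1C_3R$, Lemma~\ref{lem:cover} gives $N(\Phi_v(\Theta),r)\lesssim_\eps (R/r)^{\beta+\eps}$. The count is then
\begin{equation*}
\lesssim r^n R^{\beta+\eps}r^{-\beta-\eps}\le R^{\beta+\eps}r^{n-\beta},
\end{equation*}
where we used $r^{-\eps}\le1$.
\item If $C_1C_3R<r\le CR$, then $r\sim_C R$. Covering numbers are non-increasing in the radius, so $N(\Phi_v(\Theta),r)\le N(\Phi_v(\Theta),C_1C_3R)\lesssim_\eps 1$. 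The count is then $\lesssim r^n\sim_C R^\beta r^{n-\beta}$, which is again bounded by the right-hand side of \eqref{eq:pack}.
\end{itemize}

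There is no serious obstacle here. The only points requiring care are absorbing $c$ into $r$ (this uses $r\ge1$) and treating separately the range $r>C_1C_3R$, where Lemma~\ref{lem:cover} does not directly apply. All implicit constants depend only on $\eps$, $c$, $C$ and the structural constants, as required.
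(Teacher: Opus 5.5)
Your proof is correct and follows the paper's argument. You use the same Lipschitz inversion to place $j$ in an $O(r)$-neighborhood of $\Phi_v(\Theta)$, the same covering by $r$-balls containing $\lesssim r^n$ lattice points each, and the same case split at $r=C_1C_3R$. The only difference is cosmetic: for $r>C_1C_3R$ you bound $N(\Phi_v(\Theta),r)$ by monotonicity together with Lemma~\ref{lem:cover} at the endpoint, whereas the paper bounds the diameter of $\Phi_v(\Theta)$ directly.
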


\begin{proof}
Apply the Lipschitz bound as in Lemma~\ref{lem:mult}. If $j$ is a base point from the set above, a point $p\in(\mathcal E_j(\tilde t)+B^n_{c})\cap B^n_r(v)$ satisfies $\abs{p-v}<r$ and $\abs{\Psi_\theta(j)-p}<c$ for some $\theta\in\Theta$. Therefore $\Psi_\theta(j)\in v+B^n_{r+c}$, hence, $j$ lies in $\Phi_{v}(\Theta)+B^n_{C_1(r+c)}$.

Write $C_0=C_1C_3$. Suppose first that $1\le r\le C_0R$. By Lemma~\ref{lem:cover}, cover $\Phi_{v}(\Theta)$ by $N(\Phi_{v}(\Theta),r)\lesssim_\eps(\frac{R}{r})^{\beta+\eps}$ balls of radius $r$ and pack at most about $r^n$ lattice points into each ball, which gives \eqref{eq:pack}. Suppose now that $C_0R\le r\le CR$, that is $r\sim R$. Since $\Phi_v$ is $(C_0R)$-Lipschitz on the bounded set $\Theta$, the set $\Phi_{v}(\Theta)$ has diameter at most about $r$, so it can be covered by a number of balls of radius $r$ only depending on the diameter of $\Theta$. Therefore $N(\Phi_{v}(\Theta),r)\lesssim 1$. The count of lattice points in each ball is at most about $r^n$. Combine this with $r^\beta\le R^{\beta+\eps}$ to obtain \eqref{eq:pack}. 
\end{proof}

\section{Proof of the fixed-time estimate}\label{sec:proof}

Fix $\tilde t$ throughout this section. We prove \eqref{eq:slice} in the range $0\le\beta\le n-1$, in which the exponent $\alpha=n-\beta$ is admissible in Proposition~\ref{prop:DZ}. The special case $n-1<\beta\le n$, where the exponent is not admissible, is treated in Section~\ref{sec:highbeta} by the same argument with $\alpha=1$. Abbreviate $\mathcal E_y=\mathcal E_y(\tilde t)$.

\subsection{Tiling}
Tile $B^n_R$ by unit cubes centered at base points $j\in\Z^n$. For $y$ in the cube centered at $j$ one has $\abs{y-j}\le\sqrt n$, so \eqref{C1R} gives $\abs{\Psi_\theta(y)-\Psi_\theta(j)}\le C_1\sqrt n$ for every $\theta$. We deduce
\begin{equation}\label{eq:EyEj}
\mathcal E_y + B^n_{C_2}\ \subset\ \mathcal E_j+B^n_{c''},
\end{equation}
where $c'':=C_2+C_1\sqrt n$. Set $E_j:=(\mathcal E_j+B^n_{c''})\times[0,R)$. Now apply Corollary~\ref{cor:lcp} with $E=E_j$. Deduce that for each $j$ there is a lattice unit cube $\mathcal Q(j)\subset\R^{n+1}$ with $  E_j \cap \mathcal Q (j)\neq \emptyset$, such that, for every $y$ in the unit cube centered at $j$,
\begin{equation}\label{eq:onecube}
\sup_{(z,t)\in(\mathcal E_y+B^n_{C_2})\times[0,R)}\abs{e^{it\Delta}h(z)}^2
\ \lesssim\ \sum_{\mathfrak l\in\Z^{n}}\frac{1}{(1+\abs{\mathfrak l})^{n+1}}\,\norm{e^{i\cdot\Delta}h_{\mathfrak l}}_{L^2(\mathcal Q(j))}^2 .
\end{equation}
Furthermore, the center of $\mathcal Q(j)$ has a projection to $\R^n$ that lies at distance at most $c''+1$ from $\mathcal E_j$, and its time coordinate lies in $[0,R)$.
By \eqref{eq:inQ}, all the cubes $\mathcal Q(j)$ with $j\in\Z^n\cap B^n_R$ satisfy
\begin{equation}\label{eq:cubesinball}
\mathcal Q(j)\ \subset\ B^{n+1}_{CR}(0),
\end{equation}
with $C:=C_2+2$.

Write $\mathcal F:=\{\mathcal Q(j):j\in\Z^n\cap B^n_R\}$ for the family of distinct cubes arising this way, with \emph{multiplicity factor}
\begin{equation}\label{eq:mudef}
\mu(Q)\ :=\ \#\{j\in\Z^n\cap B^n_R:\ \mathcal Q(j)=Q\},\qquad Q\in\mathcal F .
\end{equation}
Thus $\mu(Q)$ is the number of base points $j$ which are \emph{associated to} $Q$, that is, $\mathcal Q(j)=Q$.
Notice that a base point $j$ can be associated to a lattice unit cube $Q$ only if $E_j\cap Q\neq \emptyset$, which implies
\begin{equation}\label{eq:necessary cap}
	(\mathcal E_j+B^n_{c''})\cap\pi(Q)\neq \emptyset .
\end{equation}

Integrate \eqref{eq:onecube} in $y$ over $B^n_R$ and group equal cubes,
\begin{align}\label{eq:tiling}
\Bigl\|\sup_{(z,t)\in(\mathcal E_y+B^n_{C_2})\times[0,R)}\abs{e^{it\Delta}h(z)}\Bigr\|_{L^2(B^n_R,\,dy)}^2 
&\ 
\lesssim\ \sum_{\mathfrak l\in\Z^{n}}\frac{1}{(1+\abs{\mathfrak l})^{n+1}}\sum_{j\in\Z^n\cap B^n_R}\ \norm{e^{i\cdot\Delta}h_{\mathfrak l}}_{L^2(\mathcal Q(j))}^2
\\
&\ =\ \sum_{\mathfrak l\in\Z^{n}}\frac{1}{(1+\abs{\mathfrak l})^{n+1}}\sum_{Q\in\mathcal F}\mu(Q)\,\norm{e^{i\cdot\Delta}h_{\mathfrak l}}_{L^2(Q)}^2 .\notag
\end{align}
The family $\mathcal F$ and the multiplicity factors $\mu$ do not depend on $\mathfrak l$. Only the function $h_\mathfrak{l}$ does, and by \eqref{eq:htrans} it has the same norm and the same Fourier support as $h$. Consequently, it is enough to prove that
\begin{equation}\label{eq:target}
\sum_{Q\in\mathcal F}\mu(Q)\,\norm{e^{i\cdot\Delta}\eta}_{L^2(Q)}^2\ \le\ C_\eps\,R^{2\sigma+\eps}\,\norm \eta_2^2,
\end{equation}
for every $\eta$ with $\supp\widehat\eta\subset A(1)+B^n_{1/100}$. Indeed, applying \eqref{eq:target} to $\eta=h_{\mathfrak l}$ for each $\mathfrak l$, the sum over $\mathfrak l$ in $\eqref{eq:tiling}$ converges. Thus, the right-hand side of \eqref{eq:tiling} is bounded by $C_\eps R^{2\sigma+\eps}\norm h_2^2$. The rest of this section proves \eqref{eq:target}.

\subsection{Counting}

By Lemma~\ref{lem:mult},
\begin{equation}\label{eq:mu}
\mu(Q)\ \lesssim_\eps\ R^{\beta+\eps}
\end{equation}
for every $Q$.
 For dyadic $m$ with $1\le m\lesssim R^{\beta+\eps}$ define
\begin{equation}\label{eq:Xm}
X_m:=\bigcup_{Q:\ \mu(Q)\sim m}Q\ \subset\ B^{n+1}_{CR}(0).
\end{equation}
Let us estimate the fractal density $\nu_{X_m,\alpha}$ at $\alpha=n-\beta$. Fix a ball $B^{n+1}_r(w)\subseteq B^{n+1}_{CR}(0)$ with $r\ge1$. Each cube of $X_m$ inside it is associated to about $m$ base points by definition. Every base point associated to a cube inside $B^{n+1}_r(w)$ is counted by Lemma~\ref{lem:pack} at the value $v=\pi(w)$ and radius $r$. Hence
\begin{align}
\frac{\#\{Q\subset B^{n+1}_r(w)\cap X_m\}}{r^{\,n-\beta}}
&\ \sim\ \frac{\#\{j:\ \mathcal Q(j)\subset B^{n+1}_r(w)\cap X_m\}}{m\;r^{\,n-\beta}}\label{eq:density}\\
\intertext{which is, by \eqref{eq:necessary cap}}
&\leq\frac{\#\bigl\{j:\ (\mathcal E_j(\tilde t)+B^n_{c''})\cap B^n_r(v)\cap\pi(X_m)\ne\emptyset\bigr\}}{m\;r^{\,n-\beta}}
\intertext{and now, by Lemma~\ref{lem:pack},}
&\ \lesssim_\eps\ \frac{R^{\beta+\eps}\,r^{\,n-\beta}}{m\;r^{\,n-\beta}}
\ =\ \frac{R^{\beta+\eps}}{m},\notag
\intertext{so that}
\ \nu_{X_m,\,n-\beta}&\lesssim_\eps\frac{R^{\beta+\eps}}{m}.
\label{eq:density2}
\end{align}
Since $0\le\beta\le n-1$, we may apply Proposition~\ref{prop:DZ} with $\alpha:=n-\beta\in[1,n]$ and $CR$ in place of $R$
 to arrive at
\begin{equation}\label{eq:perclass}
\norm{e^{i\cdot\Delta}\eta}_{L^2(X_m)}^2\ \lesssim_\eps\ \Bigl(\frac{R^{\beta}}{m}\Bigr)^{\frac2{n+1}}R^{\frac{n-\beta}{n+1}+\eps}\norm \eta_2^2 .
\end{equation}

\subsection{Summing}
For each unit cube $Q$ there is a unique $m$ such that $Q\subset X_m$. Using this, \eqref{eq:mu} and \eqref{eq:perclass},
\begin{align}
\sum_{Q\in\mathcal F}\mu(Q)\,\norm{e^{i\cdot\Delta}\eta}_{L^2(Q)}^2
&\ \sim\ 
\sum_{m\sim1}^{R^{\beta+\eps}}
m\,\norm{e^{i\cdot\Delta}\eta}_{L^2(X_m)}^2\label{eq:summation}\\
&\ \lesssim_\eps\ 
\sum_{m\sim1}^{R^{\beta+\eps}}
m^{1-\frac{2}{n+1}}\,R^{\frac{2\beta}{n+1}}\,R^{\frac{n-\beta}{n+1}}\,R^{\eps}\norm \eta_2^2
\ =\ 
\sum_{m\sim1}^{R^{\beta+\eps}}
m^{\frac{n-1}{n+1}}\,R^{\frac{n+\beta}{n+1}+\eps}\norm \eta_2^2\notag\\
\intertext{and, since $n\geq 1$, by \eqref{eq:mu},
}
&\qquad\ \lesssim_\eps\ \bigl(R^{\beta}\bigr)^{\frac{n-1}{n+1}}R^{\frac{n+\beta}{n+1}+2\eps}\norm \eta_2^2
\ =\ R^{\frac{n(\beta+1)}{n+1}+2\eps}\norm \eta_2^2 .\notag
\end{align}
This is \eqref{eq:target} with $\sigma=\frac{n(\beta+1)}{2(n+1)}$, after renaming $\eps$. By \eqref{eq:tiling}, and taking square roots, we obtain \eqref{eq:slice} with that value of $\sigma$, uniformly in $\tilde t$. Together with the reductions of Section~\ref{sec:red}, this proves Theorem~\ref{thm:main} in the range $0\le\beta\le n-1$.

\section{The special case $n-1<\beta\le n$}\label{sec:highbeta}

Fix $\tilde t$ throughout this section. When $\beta>n-1$ we have $\alpha=n-\beta<1$ and Proposition~\ref{prop:DZ} no longer applies for that choice of $\alpha$. We keep the whole argument and only change the choice of $\alpha$, taking the smallest admissible value $\alpha=1$. The only step that changes is the estimate of the fractal density, which is now taken at $\alpha=1$, so that the denominator is $r$ rather than $r^{\,n-\beta}$. By Lemma~\ref{lem:pack}, the number of base points reaching an $r$-ball is at most about $R^{\beta+\eps}r^{\,n-\beta}$. Thus,
\begin{align}
	\frac{\#\{Q\subset B^{n+1}_r(w)\cap X_m\}}{r}
	&\ \sim\ \frac{\#\{j:\ \mathcal Q(j)\subset B^{n+1}_r(w)\cap X_m\}}{m\;r}\label{eq:densityhigh}\\
	&\ \lesssim_\eps\ \frac{R^{\beta+\eps}\,r^{\,n-\beta-1}}{m}
	\ \leq\ \frac{R^{\beta+\eps}}{m},\notag
\end{align}
where the last inequality uses $r\ge1$ and $n-\beta-1<0$, so that the largest value is attained at $r=1$.
Therefore,
\begin{equation}\label{eq:density1}
\nu_{X_m,1}\ \lesssim_\eps\ \frac{R^{\beta+\eps}}{m}.
\end{equation}
Then Proposition~\ref{prop:DZ} with $\alpha=1$ gives $\norm{e^{i\cdot\Delta}\eta}_{L^2(X_m)}^2\lesssim_\eps(\frac{R^\beta}{m})^{\frac2{n+1}}R^{\frac1{n+1}+\eps}\norm \eta_2^2$, and the summation is done as before
\begin{align}
\sum_{Q\in\mathcal F}\mu(Q)\,\norm{e^{i\cdot\Delta}\eta}_{L^2(Q)}^2
&\ \lesssim_\eps\ \sum_{m\sim1}^{R^{\beta+\eps}} m^{\frac{n-1}{n+1}}R^{\frac{2\beta}{n+1}}R^{\frac1{n+1}+\eps}\norm \eta_2^2\label{eq:summation1}\\
&\ \lesssim_\eps\ \bigl(R^\beta\bigr)^{\frac{n-1}{n+1}}R^{\frac{2\beta+1}{n+1}+2\eps}\norm \eta_2^2\notag\\
&\ 
=\ R^{\,\beta+\frac1{n+1}+2\eps}\norm \eta_2^2 .\notag
\end{align}
This is \eqref{eq:target} with $\sigma=\frac\beta2+\frac1{2(n+1)}$. By \eqref{eq:tiling}, and taking square roots, we obtain \eqref{eq:slice}, which proves Theorem~\ref{thm:main} in the range $n-1<\beta\le n-\tfrac{1}{n+1}$. The value is larger than $\frac n2$ in $n-\frac1{n+1}<\beta\leq n$, so we keep $\sigma=\frac n2$ from the Sobolev embedding in this range.

\section*{Acknowledgments}

The author was partially supported by Grant PID2022-142202NB-I00 funded by \linebreak
AEI/10.13039/501100011033.
The author would like to thank Fernando Soria and Ana Vargas for helpful discussions on this work. The author has no conflict of interest to declare.

\section*{AI declaration}
Anthropic's Claude Opus 5 model was used during the writing phase of this article. Specifically, the model was used to transcribe handwritten notes, polish the language, format references, and find \cite{DKWZ} in search of the known ranges of \eqref{eq:fixedparam} in the literature. All mathematical results have been found and verified by the author.
\pagebreak

\bigskip
\footnotesize\raggedright
\noindent\textsc{Javier Minguillón, Department of Mathematics, Universidad Autónoma de Madrid, 28049 Madrid, Spain}\\
\textit{E-mail address:} \texttt{javier.minguillon@uam.es}

\end{document}